\newtheorem{defi}{Definition}[section] 
\newtheorem{teo}[defi]{Theorem}
\newtheorem{cor}[defi]{Corollary} 
\newtheorem{lemma}[defi]{Lemma}
\newtheorem{prop}[defi]{Proposition}
\newtheorem*{notation}{Notation}
\theoremstyle{definition}
\newtheorem{oss}{Remark}[section]
\newcommand{\Pp}{\mathbb{P}}
\newcommand{\R}{\mathbb{R}}
\newcommand{\h}{\mathbb{H}}
\newcommand{\N}{\mathbb{N}}
\newcommand{\dPSL}{\mathbb{P}SL(2,\mathbb{R})\times \mathbb{P}SL(2,\mathbb{R})}
\newcommand{\PSL}{\mathbb{P}SL}
\newcommand{\T}{\mathcal{T}}
\newcommand{\Ima}{\mathrm{Im}}
\newcommand{\Imm}{\mathcal{I}m}
\newcommand{\trace}{\mathrm{tr}}
\newcommand{\cro}{\mathrm{cr}}
\newcommand{\B}{\mathcal{B}}
\newcommand{\D}{\mathcal{D}}
\newcommand{\C}{\mathcal{C}}
\newcommand{\PP}{\mathcal{P}}
\newcommand{\GH}{\mathcal{GH}}
\newcommand{\Ein}{\mathrm{Ein}}
\newcommand{\arccosh}{\mathrm{arccosh}}
\newcommand{\hol}{\mathrm{hol}}
\newcommand{\dev}{\mathrm{dev}}
\newcommand{\Hom}{\mathrm{Hom}}
\newcommand{\tw}{\mathrm{tw}}
\newcommand{\Ree}{\mathcal{R}e}
\DeclareMathAlphabet{\mathpzc}{OT1}{pzc}{m}{it}
\DeclareMathOperator{\arctanh}{arctanh}
\title[Fenchel-Nielsen coordinates in AdS geometry]{Fenchel-Nielsen coordinates on the augmented moduli space of anti-de Sitter structures}
\author{Andrea Tamburelli}
\date{\today}
\thanks{}
\begin{document}

\begin{abstract} 
In this paper we combine our recent work on regular globally hyperbolic maximal anti-de Sitter structures with the classical theory of globally hyperbolic maximal Cauchy-compact anti-de Sitter manifolds in order to define an augmented moduli space. Moreover, we introduce a coordinate system in this space that resembles the complex Fenchel-Nielsen coordinates for hyperbolic quasi-Fuchsian manifolds.
\end{abstract}

\maketitle
\setcounter{tocdepth}{1}
\tableofcontents

\section*{Introduction} 
In the last few years people have been interested in the study of surface group representations into higher rank Lie groups aiming at understanding to which extent the well-known Teichm\"uller theory for $\PSL(2,\R)$ can be generalised (\cite{Wienhard_ICM}). One example of higher Teichm\"uller space is the deformation space of globally hyperbolic maximal Cauchy-compact (GHMC) anti-de Sitter structures $\mathcal{GH}(S)$ on $S\times \R$, where $S$ is a closed, orientable surface of genus at least $2$. This was introduced by the pioneering work of Mess (\cite{Mess}) and can be described as the space of maximal representations of $\pi_{1}(S)$ into $\dPSL$, up to conjugation. Recently, we extended this theory to surfaces with punctures (\cite{Tambu_poly}, \cite{Tambu_regularAdS}, \cite{wildAdS}) and defined regular globally hyperbolic maximal anti-de Sitter manifolds: these are diffeomorphic to $S\times \R$, where now $S$ is a surface with punctures and negative Euler characteristic, and we allow the projection of the holonomy around each puncture onto any of the two $\PSL(2,\R)$ factors to be hyperbolic or parabolic. However, in this case, the holonomy does not suffice to describe the structure uniquely and new extra data about the behaviour of the developing map must be included. Moreover, in a recent paper (\cite{Tambu_degeneration}) we showed that regular globally hyperbolic maximal anti-de Sitter structures naturally appear as geometric limits along pinching sequences of GHMC anti-de Sitter manifolds.\\

The aim of this paper is to give a unified picture of the two theories by defining an \emph{augmented} deformation space of globally hyperbolic maximal anti-de Sitter structures $\mathcal{GH}(S)^{aug}$ and by providing a coordinate system, resembling the complex Fenchel-Nielsen coordinates for hyperbolic quasi-Fuchsian manifolds (\cite{MR1266284}, \cite{MR1196924}), that varies continuously under pinching of a multi-curve, up to the action of a suitable subgroup of the mapping class group. To this aim, we will use an upper-half space model of anti-de Sitter space introduced by Danciger (\cite{Jeff_thesis}), which will make the analogy with the hyperbolic setting very explicit. We will describe the model in details in Section \ref{sec:model}. For now, it suffices to know that it is constructed using the $\R$-algebra $\B$ generated by an element $\tau$ with the property that $\tau^{2}=1$ and identifies the group of orientation and time-orientation isometries of anti-de Sitter space as $\PSL(2,\B)$. For every simple closed curve in a pants decomposition of $S$, we will define a $\B$-length and a $\B$-twist-bend parameter and we will prove the following:\\
\\
{\bf{Theorem A.}} {\emph{Let $S$ be a closed, orientable surface of genus $g\geq 2$. Fix a pants decomposition $\PP$ of $S$. There is a homeomorphism
\[
    \Phi_{\PP}: \mathcal{GH}(S) \rightarrow (\mathcal{C}^{+}\times \B)^{3g-3} \ ,
\]
where $\C^{+}$ denotes the positive cone of space-like points in $\B$ with positive real part, obtained by associating to a GHMC anti-de Sitter structure the $\B$-lengths and the $\B$-twist-bend parameters of the curves in $\PP$.}}\\
\\
The above result, with slight modifications, also applies to regular GHM anti-de Sitter structures in case of surfaces with punctures.\\

The main result of the paper explains how to extend these coordinates to the augmented deformation space when we allow the structure to degenerate along a multi-curve. We will see that the augmented deformation space is naturally stratified $\mathcal{GH}(S)^{aug}=\bigcup_{\D}\mathcal{GH}(S,\D)^{reg}$, where each stratum is determined by multi-curve $\D$ on $S$, and we will show the following:\\
\\
{\bf{Theorem B.}} {\emph{Let $\mu \in \mathcal{GH}(S)^{aug}$ and let $\D$ be a multi-curve so that $\mu \in \mathcal{GH}(S,\D)^{reg}$. Fix a pants decomposition $\PP \supset \D$ of $S$. Let $G_{\D}$ be the subgroup of the mapping class group generated by Dehn-twists about the simple closed curves in $\D$. Then
\[
    \GH(S)^{aug,\D}:=\bigcup_{\D'\subset \D}\GH(S,\D')^{reg}
\]
is an open subset of $\GH(S)^{aug}$ containing $\mu$ and invariant under $G_{\D}$. Moreover, there is a homeomorphism
\[
    \Phi_{\PP, \D}: \GH(S)^{aug, \D}/G_{\D}\rightarrow (\C^{+}\times \B)^{3g-3-m}\times \R^{4m}
\]
where $m$ is the number of curves in $\D$.}}\\
\\
The coordinates are an explicit extension of the $\B$-lenghts and the $\B$-twist-bend parameters in Theorem A. This theorem should be interpreted as a generalisation of the standard result in Teichm\"uller theory about the behaviour of Fenchel-Nielsen coordinates at the boundary of the augmented Teichm\"uller space of $S$ and as the anti-de Sitter analogue of a recent theorem by Loftin and Zhang (\cite{Loftin_Zhang}) in the context of convex real projective structures.

\subsection*{Outline of the paper} In Section \ref{sec:model} we recall the main features of the upper-half space model of anti-de Sitter space. In particular, we describe the analogues of shearing and bending in anti-de Sitter geometry which will allow us to define Fenchel-Nielsen coordinates in Section \ref{sec:coord}. In Section \ref{sec:GHMC} we study the augmented deformation space of globally hyperbolic maximal anti-de Sitter structures and its topology. Section \ref{sec:mainthm} deals with the proof of Theorem B.

\section{Upper-half space model of anti-de Sitter space}\label{sec:model}
In his thesis (\cite{Jeff_thesis}), Danciger introduced an upper-half space model for anti-de Sitter space using generalised Clifford numbers. As it may not be familiar to the reader, we recall here the main features and properties of this model that we are going to use in the sequel.

\subsection{The algebra $\B$} Let $\B$ be the $\R$-algebra generated by an element $\tau$ with $\tau^{2}=1$. In particular, it is a two-dimensional vector space and we will often denote $\B=\R \oplus \tau\R$. Borrowing terminology from the complex numbers, we talk about real and imaginary part of an element of $\B$ and we define a conjugation 
\[
    \overline{a+\tau b}=a-\tau b \ .
\]
This induces a norm on $\B$ by taking
\[
    |a+\tau b|=(a+\tau b)\overline{(a+\tau b)}=a^{2}-b^{2} \ .
\]
Notice that elements of $\B$ may have negative norm. In fact, the bi-linear extension of this norm defines a Minkowski inner product on $\R^{2}$ with orthonormal basis $\{1,\tau\}$. We call space-like the elements of $\B$ of positive norm: these form a subgroup of $\B$ under multiplication. It is also convenient to work with the basis of idempotents
\[
    e^{+}=\frac{1+\tau}{2} \ \ \ \text{and} \ \ \  e^{-}=\frac{1-\tau}{2}
\]
as the map 
\[
    ae^{+}+be^{-} \mapsto (a,b)
\]
gives an isomorphism of $\R$-algebras between $\B$ and $\R\oplus \R$, where in the latter operations are carried out component by component. Let $M(2,\B)$ denote the algebra of two-by-two matrices with coefficients in $\B$. Decomposing an element of $M(2, \B)$ into $Ae^{+}+Be^{-}$, where $A$ and $B$ have real coefficients, gives an isomorphism between $M(2, \B)$ and $M(2,\R)\times M(2,\R)$. Moreover, because 
\[
    \det(Ae^{+}+Be^{-})=\det(A)e^{+}+\det(B)e^{-}
\]
this descends to an isomorphism $\PSL(2, \B) \cong \dPSL$. \\

The compactification of $\B$ is defined by
\[
    \Pp\B^{1}=\{(x,y) \in \B^{2} \ | \ \text{if $\alpha x=\alpha y=0$ then $\alpha=0$} \ \}/ \sim
\]
with $(x,y)\sim (\lambda x, \lambda y)$ for every $\lambda \in \B^{*}$. As common, we embed $\B$ into $\Pp\B^{1}$ by identifying
\[
    \B=\{ [x,1]\in \Pp\B^{1} \ | \ x \in \B \}\ .
\]
Notice that not only one point has been added at infinity (which happens when working with coefficients in a field). Again, the decomposition 
\[
    [ae^{+}+be^{-}, ce^{+}+de^{-}]=[a,c]e^{+}+[b,d]e^{-}
\]
provides a homeomorphism between $\Pp\B^{1}$ and $\R\Pp^{1} \times \R\Pp^{1}$, which is compatible with the action of $\PSL(2, \B)$ on $\Pp\B^{1}$ and the diagonal action of $\dPSL$ on $\R\Pp^{1}\times \R\Pp^{1}$. It turns out that $\Pp\B^{1}$ is homeomorphic to the Lorentzian compactification of Minkowski space, thus it is a copy of the $2$-dimensional Einstein Universe $\Ein^{1,1}$ (\cite[pag. 102]{Jeff_thesis}). 

\subsection{Upper-half space model} Starting from the algebra $\B$, we add an 
element $j$ such that $j^{2}=-1$ and $j\tau=-\tau j$. This defines a $4$-dimensional algebra $\mathcal{A}$ over the reals that is isomorphic to the algebra of real two-by-two matrices. We can extend linearly the conjugation on $\B$ to elements of $\mathcal{A}$ by defining
\[
    \overline{j}=-j \ \ \ \text{and} \ \ \ \overline{zw}=\bar{z}\bar{w}\ ,
\]
and consider the square-norm given by $|z|^{2}=z\overline{z} \in \R$. Let $V=\mathrm{Span}\{1, \tau, j\}$. The bi-linear extension of the square-norm restricted to $V$ induces an inner-product of signature $(2,1)$ with orthogonal basis $\{1, j, \tau\}$. We compactify $V$ by introducing
\[
    \Pp V=\{ (x,y) \in \mathcal{A}^{2} \ | \ x\overline{y} \in V, \ \text{if $x\alpha =y\alpha=0$ then $\alpha=0$}\} / \sim
\]
with $(x,y)\sim (x\lambda, y\lambda )$ for every $\lambda \in \mathcal{A}^{*}$. As usual, we identify $V$ with the subset of $\Pp V$ consisting of points with coordinates $[x,1]$ with $x \in V$. In this way, $\Pp V$ coincides with the Lorentzian conformal compactification of $V$, obtained by adding all endpoints of lines of $V$ (\cite[pag. 142]{Jeff_thesis}). Thus $\Pp V$ is a copy of the $3$-dimensional Einstein Universe $\Ein^{2,1}$. Moreover, it is easy to check that $\PSL(2, \B)$ acts transitively and faithfully on $\Pp V$ by matrix multiplication.\\

The algebra homomorphism $\mathcal{I}: \mathcal{A} \rightarrow \mathcal{A}$ determined by $\mathcal{I}(1)=1, \mathcal{I}(\tau)=\tau$ and $\mathcal{I}(j)=-j$ defines an involution of $V$ that fixes $\B$ and extends to an involution of $\Pp V$ by setting $\mathcal{I}([x,y])=[\mathcal{I}(x), \mathcal{I}(y)]$. The quotient
\[
    X=\Pp V / \mathcal{I}
\]
is homeomorphic to a solid torus with boundary $\Pp\B^{1}$ and will be the upper-half space model of anti-de Sitter space, once we have introduced a Lorentzian metric on $X$ with constant sectional curvature $-1$.\\

We first define a Lorentzian metric on $V \setminus \B$, and then we extend it to $\Pp V \setminus \Pp\B^{1}$ using the transitive action of $\PSL(2, \B)$. Let $x_{1}, x_{2}, x_{3}$ be real coordinates on $V$ with respect to the basis $\{1, \tau, j\}$. In analogy to the upper-half space model in hyperbolic geometry, we consider the Lorentzian metric on $V\setminus \B$ given by 
\[
    ds^{2}_{[x,1]}=\frac{dx_{1}^{2}-dx_{2}^{2}+dx_{3}^{2}}{x_{3}^{2}} \ .
\]
We first check that $\PSL(2,\B)$ acts isometrically on $(V\setminus \B, ds^{2})$. Let 
\[
    A=\begin{pmatrix} a & b \\ c & d \end{pmatrix} \in \PSL(2, \B)
\]
and let $[x,1] \in V$. We assume that $A[x,1] \in V$, which means that $cx+d \in \mathcal{A}^{*}$. Under this assumption, $A[x,1]=[f(x),1]$, where $f(x)=(ax+b)(cx+d)^{-1}$. Differentiating the expression $f(x)(cx+d)=ax+b$ and applying it to a tangent vector $u\in V$ we obtain that (\cite[A.2. pag 145]{Jeff_thesis})
\[
    df_{x}(u)=(cx+d)^{-1}u(cx+d)^{-1} \ ,
\]
from which we deduce that the action of $A$ on $V$ by M\"obius transformation is conformal with respect to the Minkowski metric on $V$. Moreover, writing
\[
    f(x)=f_{1}(x)+f_{2}(x)\tau+f_{3}(x)j
\]
we see that $f_{3}(x)=\frac{x_{3}}{|cx+d|^{2}}$ and 
\[
    A^{*}ds^{2}=\frac{df_{1}^{2}-df_{2}^{2}+df_{3}^{2}}{f_{3}(x)^{2}}=\frac{dx_{1}^{2}-dx_{2}^{2}+dx_{3}^{2}}{f_{3}(x)^{2}|cx+d|^{4}}=\frac{dx_{1}^{2}-dx_{2}^{2}+dx_{3}^{2}}{x_{3}^{2}} \ ,
\]
hence the metric is preserved by the action of $\PSL(2,\B)$. We can then extend the metric $ds^{2}$ to the whole $\Pp V \setminus \Pp \B$ by requiring that $\PSL(2, \B)$ act isometrically, i.e. given a point $[x,y] \in \Pp V$ we set 
\[
    ds^{2}_{[x,y]}=A^{*}ds^{2}_{A[x,y]}
\]
where $A \in \PSL(2, \B)$ is any matrix such that $A[x,y] \in V$. Notice that this does not depend on the choice of $A$. Moreover, the classical computation for the curvature of the hyperbolic metric in the upper-half space model can be adapted to this Lorentzian setting and shows that $ds^{2}$ has constant sectional curvature $-1$. Since the involution $\mathcal{I}$ preserves the metric $ds^{2}$, this descends to a Lorentzian metric on $X$. In the next section we will describe geodesics in this model and show that $X$ is geodesically complete and time-like geodesics have length $\pi$, thus $X$ is isometric to anti-de Sitter space. Moreover, since the topological boundary $\Pp \B^{1}$ lies at infinite distance from any point in the interior of $X$ with respect to the metric $ds^{2}$, we will often call $\Pp\B^{1}$ the boundary at infinity of anti-de Sitter space. Notice that $\Pp\B^{1}$ inherits a conformal Lorentzian structure which coincides with that described in the previous section coming from seeing $\Pp\B^{1}$ as compactification of Minkowski space.

\subsection{Geodesics} Let $P=\mathrm{Span}\{1,j\}$: it is a totally geodesic plane isometric to $\h^{2}$. Therefore, the curve $\gamma(t)=e^{t}j$ is a unit-speed space-like geodesic in $X$. In order to find the general form of a space-like geodesic in this model, it is thus sufficient to translate $\gamma(t)$ by the isometric action of $ \PSL(2, \B)$. The result is analogous to the description of geodesics in the upper-half space model of hyperbolic space:

\begin{prop}\cite[Proposition 58]{Jeff_thesis}\label{prop:spacelikegeo} Let $p_{1}, p_{2} \in \B$ be such that the displacement $\Delta=(p_{1}-p_{2})/2$ is not light-like. Let $p=(p_{1}+p_{2})/2$ be the midpoint. Then the anti-de Sitter geodesic $\gamma$ connecting $p_{1}$ and $p_{2}$ is the conic in the affine plane $p+\mathrm{Span}\{\Delta, j\}$ defined by the equation 
\[
    |\gamma-p|^{2}=|\Delta|^{2} \ .
\]
If $\Delta$ is space-like, then $\gamma$ is a Euclidean ellipse, and if $\Delta$ is time-like, then $\gamma$ is a hyperbola. In either case, $\gamma$ is orthogonal to the boundary at infinity.
\end{prop}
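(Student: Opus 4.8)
The plan is to reduce the statement to a single normalized geodesic, exploiting that $\PSL(2,\B)$ acts isometrically on $X$ (as established above) and that isometries carry geodesics to geodesics. The base case is the geodesic lying in the totally geodesic plane $P=\mathrm{Span}\{1,j\}$: since $P$ is isometric to the upper half-plane model of $\h^{2}$, its geodesics are the Euclidean half-circles centred on $\B\cap P=\R$ and orthogonal to it, so the geodesic with endpoints $1$ and $-1$ is the unit half-circle $\{x_{1}^{2}+x_{3}^{2}=1,\ x_{3}>0\}$. On $P$ the quadratic form is $|\gamma|^{2}=x_{1}^{2}+x_{3}^{2}$, so this is precisely the conic $|\gamma-p|^{2}=|\Delta|^{2}$ with $p=0$, $\Delta=1$, and it meets $\B$ orthogonally. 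It then remains to move this model geodesic onto an arbitrary one by an isometry and to track how the defining equation transforms.

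For the space-like case I would use the ``affine'' isometries $g(x)=\mu x\mu+p$, represented by $\begin{pmatrix}\mu & p\mu^{-1}\\ 0 & \mu^{-1}\end{pmatrix}\in\PSL(2,\B)$ with $\mu\in\B$. Writing $\Delta$ in the idempotent basis, every space-like $\Delta$ of positive real part has a square root $\mu\in\B$ (the negative-real-part subcase being obtained by the harmless swap $p_{1}\leftrightarrow p_{2}$), so I can arrange $\mu^{2}=\Delta$. A direct computation, using that $\B$ is commutative and that $\mu j\mu=|\mu|^{2}j$, shows that $g$ sends the point $s\cdot 1+u\,j$ of the unit circle to $p+s\Delta+u|\mu|^{2}j$; since $\langle\Delta,j\rangle=0$, $|j|^{2}=1$ and $|\mu|^{4}=|\Delta|^{2}$, this image satisfies $|\gamma-p|^{2}=(s^{2}+u^{2})|\Delta|^{2}=|\Delta|^{2}$ and lies in $p+\mathrm{Span}\{\Delta,j\}$. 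As $g$ carries $\pm 1$ to $p\pm\Delta=p_{1},p_{2}$, this is the geodesic joining $p_{1}$ and $p_{2}$; being a linear image of a circle confined to a plane it is a bounded ellipse, and orthogonality at $x_{3}=0$ is visible from the parametrisation, whose tangent there points along $j$.

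The time-like case is the one I expect to cause trouble, because no $\mu\in\B$ satisfies $\mu^{2}=\Delta$ when $|\Delta|^{2}<0$, so the affine maps can no longer reach these configurations. Instead I would first normalise, by an affine map, to $p=0$, $\Delta=\tau$ with endpoints $\pm\tau$, and then relate this to the base case through the single extra isometry $g_{0}=\begin{pmatrix}e^{+} & -e^{-}\\ e^{-} & e^{+}\end{pmatrix}$, which has determinant $1$ and corresponds to $(\mathrm{Id},\,z\mapsto -1/z)$ under $\PSL(2,\B)\cong\dPSL$; this map sends $\{1,-1\}$ to $\{\tau,-\tau\}$. Computing its effect on the parametrised base circle $\cos\theta\cdot 1+\sin\theta\,j$ yields $\sec\theta\,\tau+\tan\theta\,j$, i.e. the hyperbola $x_{2}^{2}-x_{3}^{2}=1$ in $\mathrm{Span}\{\tau,j\}$, which is exactly $|\gamma|^{2}=|\tau|^{2}=|\Delta|^{2}=-1$. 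The fact that a single isometry relates the two normalized models reflects that the ellipse and the hyperbola are the same geodesic seen in different charts.

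The genuine obstacle, and the point requiring the most care, is the global shape of this last curve: in the affine chart $V$ its image is disconnected, consisting of the two branches of the hyperbola, one with $x_{3}>0$ and one with $x_{3}<0$, because the connected anti-de Sitter geodesic passes through the point $g_{0}(j)$ lying at infinity of the chart (the value $\theta=\pi/2$). I would resolve this by invoking the involution $\mathcal{I}\colon x_{3}\mapsto -x_{3}$ defining $X=\Pp V/\mathcal{I}$: it glues the lower branch into the upper half-space, so that the full conic $|\gamma-p|^{2}=|\Delta|^{2}$ represents a single connected geodesic that does reach both endpoints $p_{1},p_{2}$ on $\B$. This is precisely where the dichotomy ellipse/hyperbola becomes visible, and also where one must check that $g_{0}$ is realised by a genuine orientation- and time-orientation-preserving element of $\PSL(2,\B)$ rather than merely a projective map of $\Pp\B^{1}$. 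Orthogonality to the boundary in this case again follows either from the explicit tangent computation at $x_{3}=0$ or from the conformality of the $\PSL(2,\B)$-action on $\Pp V$.
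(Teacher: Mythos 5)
Your argument is correct and follows exactly the strategy the paper indicates (and defers to Danciger's thesis for): identify a model geodesic in the totally geodesic hyperbolic plane $P=\mathrm{Span}\{1,j\}$ and transport it by the isometric $\PSL(2,\B)$-action, using the affine maps $x\mapsto \mu x\mu+p$ for space-like displacements and one extra inversion for the time-like case. The computations you outline (in particular $\mu j\mu=|\mu|^{2}j$, the square-root argument in the idempotent basis, and the role of the involution $\mathcal{I}$ in reconnecting the two hyperbola branches through the point at infinity of the chart) all check out.
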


Consider now the curve $\sigma(t)=[\sin(t)\tau+j, \cos(t)]$. It is straightforward to check that $\sigma(t)$ is a unit-speed, time-like curve of length $\pi$: since $\sigma(0)=[j,1]=[\mathcal{I}(j), \mathcal{I}(1)]=[-j,1]=\sigma(\pi)$, the curve closes up in $X$ after a period of $\pi$. Moreover, a standard but tedious computations using Christoffel symbols shows that $\sigma$ is indeed a geodesic. Therefore, any other time-like geodesic can be obtained by translating $\sigma$ by the isometric action of $\PSL(2, \B)$, and we get the following:

\begin{prop}\cite[Proposition 59]{Jeff_thesis} Let $p, \Delta \in \B$ with $|\Delta|^{2}<0$. Then the set of points $\gamma$ in the affine plane $p+ \mathrm{Span}\{\Delta, j\}$ satisfying
\[
    |\gamma-p|^{2}=-|\Delta|^{2}
\]
defines a time-like geodesic.
\end{prop}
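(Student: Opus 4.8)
The plan is to prove the statement exactly as the remark preceding it suggests: by transporting the explicit time-like geodesic $\sigma$ constructed above onto the given locus by an isometry in $\PSL(2,\B)$. First I would record the model case. Writing $\sigma(t)$ in the affine chart as $\tan(t)\,\tau+\sec(t)\,j$, one sees that $\sigma$ lies in the plane $\mathrm{Span}\{\tau,j\}$ and, since $|\tan(t)\,\tau+\sec(t)\,j|^{2}=\sec^{2}(t)-\tan^{2}(t)=1$, that it satisfies $|\gamma|^{2}=1=-|\tau|^{2}$. Thus $\sigma$ is precisely the locus of the proposition for the parameters $p=0$ and $\Delta=\tau$, and it is a time-like geodesic by the computation already carried out. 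It therefore suffices to exhibit, for arbitrary $p\in\B$ and time-like $\Delta\in\B$, an isometry carrying the locus $\{\gamma\in p+\mathrm{Span}\{\Delta,j\}\ :\ |\gamma-p|^{2}=-|\Delta|^{2}\}$ onto $\sigma$.

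Next I would assemble this isometry from three elementary elements of $\PSL(2,\B)$, each acting as an affine map of $V$. The translation $\begin{pmatrix}1&-p\\0&1\end{pmatrix}$ acts by $x\mapsto x-p$ and normalises the centre to $0$. The diagonal matrix $\begin{pmatrix}\lambda&0\\0&\lambda^{-1}\end{pmatrix}$ with $\lambda=\cosh(s)+\sinh(s)\,\tau$ a unit space-like element acts by $x\mapsto\lambda x\lambda$; using $\tau^{2}=1$ and $j\tau=-\tau j$ one checks that it fixes $j$ and acts on $\mathrm{Span}\{1,\tau\}=\B$ as the boost of rapidity $2s$, so that a suitable choice carries $\Delta$ onto a multiple of $\tau$ while preserving its norm. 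Finally the real dilation $\begin{pmatrix}\sqrt{\mu}&0\\0&1/\sqrt{\mu}\end{pmatrix}$ acts by $x\mapsto\mu x$ and, with $\mu=(-|\Delta|^{2})^{-1/2}$, rescales the radius to $1$. All three preserve $ds^{2}$ by the isometric action of $\PSL(2,\B)$ established above, hence map time-like geodesics to time-like geodesics.

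Then I would track the locus through the composition. Since each map is affine, it sends the affine plane $p+\mathrm{Span}\{\Delta,j\}$ to an affine plane and the quadric $|\gamma-p|^{2}=-|\Delta|^{2}$ to a quadric of the same shape. The translation removes $p$, leaving $|\gamma|^{2}=-|\Delta|^{2}$ on $\mathrm{Span}\{\Delta,j\}$. The boost leaves this equation invariant, since it preserves $|\cdot|^{2}$, and moves the plane to $\mathrm{Span}\{\tau,j\}$ because it aligns $\Delta$ with $\tau$. Under the dilation both sides scale by $\mu^{2}$, as $|\mu\,\eta|^{2}=\mu^{2}|\eta|^{2}$ and $-|\mu\Delta|^{2}=-\mu^{2}|\Delta|^{2}$, so the equation becomes $|\gamma|^{2}=1$. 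The composite therefore carries the locus onto $\{|\gamma|^{2}=1\}\cap\mathrm{Span}\{\tau,j\}=\sigma$, and being an isometry it identifies the locus with a time-like geodesic.

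The main obstacle will be the bookkeeping for the action of the diagonal matrices on $V$: one must verify carefully, from $\tau^{2}=1$ and $j\tau=-\tau j$, that $x\mapsto\lambda x\lambda$ genuinely fixes the $j$-axis and acts as a boost on the $\B$-plane, and that boosts act transitively on unit time-like vectors of $\B$. The only mild subtlety is that a boost reaches $\tau$ only within one time-orientation component of the time-like unit vectors, so for $\Delta$ in the other component it lands on $-\tau$; this is harmless, since both the plane $\mathrm{Span}\{\Delta,j\}$ and the scalar $|\Delta|^{2}$ entering the equation are insensitive to the sign of $\Delta$. Once these actions are pinned down, the preservation of planarity and of the conic equation follows by routine linear algebra.
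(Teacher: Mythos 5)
Your proposal is correct and follows the same route the paper takes: it identifies $\sigma$ as the model locus for $p=0$, $\Delta=\tau$, and transports the general locus onto it by the isometric action of $\PSL(2,\B)$, which is exactly the remark preceding the proposition; you merely make the translating isometries (translation, boost $x\mapsto\lambda x\lambda$, real dilation) explicit and verify they preserve the affine plane and the conic equation. The bookkeeping you flag, including the sign ambiguity $\Delta\mapsto-\tau$, checks out, so no gap remains.
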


Notice that the geodesic $\sigma$ corresponds in the proposition above to the choice of $p=0$ and $\Delta= \tau$. We conclude with the description of light-like geodesics:

\begin{prop}\cite[Proposition 60]{Jeff_thesis} The parameterised light-like geodesics that intersect the boundary at infinity at $p \in \B$ is given by 
\[
    \gamma(t)=[p+\frac{1}{t}v,1]
\]
where $v\in \mathrm{Span}\{1,\tau,j\}$ is a light-like vector.
\end{prop}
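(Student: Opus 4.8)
The plan is to exploit that the metric $ds^{2}$ on $V\setminus \B$ is, by its very definition, conformal to the flat Minkowski metric $q=dx_{1}^{2}-dx_{2}^{2}+dx_{3}^{2}$ on the upper half space $\{x_{3}>0\}$, with conformal factor $x_{3}^{-2}$. Since unparametrized light-like geodesics are a conformal invariant, the light-like geodesics of $ds^{2}$ coincide, as point sets, with the null geodesics of $q$, which are simply straight Euclidean lines in light-like directions. Among these, the ones lying in the interior $\{x_{3}>0\}$ and reaching the boundary $\B=\{x_{3}=0\}$ at a prescribed point $p$ are exactly the rays $\{p+sv \ : \ s>0\}$, where $v\in V$ is light-like with nonzero $j$-component; if that component vanishes the line stays inside $\B$ and degenerates to a light-like curve of the boundary $\Pp\B^{1}$ rather than a geodesic of $X$. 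Reparametrizing by $s=1/t$ produces exactly the curves $\gamma(t)=[p+\frac{1}{t}v,1]$, so this both exhibits each such curve as a light-like geodesic and identifies the whole set of light-like geodesics with endpoint $p$.

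To confirm that $t\mapsto 1/t$ is the correct affine parameter — conformal invariance only controls the curves up to reparametrization — I would verify the geodesic equation directly. Writing $g_{ab}=x_{3}^{-2}\eta_{ab}$ with $\eta=\mathrm{diag}(1,-1,1)$, the Christoffel symbols are those of a conformally flat metric,
\[
    \Gamma^{c}_{ab}=-\frac{1}{x_{3}}\bigl(\delta^{c}_{a}\delta^{3}_{b}+\delta^{c}_{b}\delta^{3}_{a}-\eta_{ab}\delta^{c}_{3}\bigr) \ .
\]
For $\gamma(t)=(p_{1}+v_{1}/t,\,p_{2}+v_{2}/t,\,v_{3}/t)$ one has $\dot\gamma=-t^{-2}(v_{1},v_{2},v_{3})$ and $\ddot\gamma=2t^{-3}(v_{1},v_{2},v_{3})$. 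The crucial simplification is that the term $\eta_{ab}\dot\gamma^{a}\dot\gamma^{b}=t^{-4}|v|^{2}$ vanishes because $v$ is light-like; what survives of $\Gamma^{c}_{ab}\dot\gamma^{a}\dot\gamma^{b}$ is $-2\dot\gamma^{c}\dot\gamma^{3}/x_{3}=-2v_{c}t^{-3}$, which cancels exactly against $\ddot\gamma^{c}=2v_{c}t^{-3}$. Hence $\ddot\gamma^{c}+\Gamma^{c}_{ab}\dot\gamma^{a}\dot\gamma^{b}=0$ and the parametrization is affine. I would then read off the geometry: as $t\to\infty$ the point $\gamma(t)$ tends to $[p,1]=p\in\Pp\B^{1}$, so the geodesic does meet the boundary at infinity at $p$, while for $t\in(0,\infty)$ its height $v_{3}/t$ keeps a constant sign, so after fixing the orientation of $v$ the curve stays in the interior.

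The only delicate point is the exhaustion: that there are no further light-like geodesics ending at $p$. This is exactly where the conformal-invariance argument carries the weight, since it identifies every null geodesic of $ds^{2}$ with a straight null line of $q$ and thereby rules out any additional possibilities; the direct Christoffel computation, taken alone, would only show that the proposed family consists of geodesics without proving that the family is complete. I therefore expect the verification of the geodesic equation to be entirely routine, and the conceptual content to lie in combining it with the conformal identification to obtain the converse direction.
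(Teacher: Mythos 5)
Your proof is correct, but it follows a genuinely different route from the paper. The paper gives no argument at all for this statement (it is imported verbatim from Danciger's thesis), and its template for the other two causal types is to exhibit one explicit model geodesic — a curve in the totally geodesic plane $\mathrm{Span}\{1,j\}$ for the space-like case, the curve $\sigma(t)=[\sin(t)\tau+j,\cos(t)]$ (verified "by a standard but tedious computation using Christoffel symbols") for the time-like case — and then transport it by the transitive isometric action of $\PSL(2,\B)$. You instead use conformal invariance of unparametrized null geodesics to reduce everything at once to straight null lines of the flat metric $dx_{1}^{2}-dx_{2}^{2}+dx_{3}^{2}$, which is exactly what settles the exhaustion (any null geodesic meeting $\B$ at $p$ must, in the chart $V$, be a null ray from $p$ with nonzero $j$-component), and then a direct Christoffel computation to fix the affine parameter $1/t$; I checked the computation, and the two key cancellations — the term $\eta_{ab}\dot\gamma^{a}\dot\gamma^{b}=t^{-4}|v|^{2}$ dying because $v$ is null, and $-2\dot\gamma^{c}\dot\gamma^{3}/x_{3}=-2v^{c}t^{-3}$ cancelling $\ddot\gamma^{c}$ — are right. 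The trade-off is clear: the paper's orbit strategy is uniform across the three causal types but, applied to the null case, would still require a separate argument that $\PSL(2,\B)$ acts transitively on null directions based at boundary points in order to get completeness of the family; your argument is self-contained and delivers existence, the correct parametrization, and the converse (no further null geodesics through $p$) in one pass, and you correctly flag that conformal invariance and the affine check each carry a part of the burden that the other cannot.
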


\subsection{Classification of isometries} The isomorphism $\PSL(2,\B)\cong \dPSL$ toghether with the standard classification of isometries of the hyperbolic plane into hyperbolic, parabolic and elliptic, provides a natural classification of isometries of anti-de Sitter space. Precisely, we say that an element in $\PSL(2, \B)$ is 
\begin{itemize}
    \item loxodromic, if it corresponds to a pair of hyperbolic isometries of $\h^{2}$;
    \item semi-loxodromic, if it is given by a hyperbolic and a parabolic element;
    \item parabolic, if it corresponds to a pair of parabolic isometries of $\h^{2}$.
\end{itemize}
We are now going to describe the main features of these classes of isometries, and introduce their $\B$-length.

\subsubsection{Loxodromic isometries} Up to conjugation, a loxodromic isometry is given by 
\[
    A=\begin{pmatrix} e^{\frac{\lambda}{2}} & 0 \\ 0 & e^{-\frac{\lambda}{2}} \end{pmatrix}e^{+}+\begin{pmatrix} e^{\frac{\mu}{2}} & 0 \\ 0 & e^{-\frac{\mu}{2}} \end{pmatrix}e^{-}
\]
for some positive real numbers $\lambda$ and $\mu$, which are the translation length of each diagonal matrix acting on the hyperbolic plane. It has four fixed points on $\Pp\B^{1}$ ($[1,0],[e^{+},e^{-}],[e^{-},e^{+}]$ and $[0,1]$) and leaves two space-like geodesic invariant. The invariant geodesic $\gamma(t)=e^{t}j\ \in V$ has endpoints $[1,0]$ and $[0,1]$, which are the repelling and attracting fixed points of $A$, thus we will consider it as the axis of the isometry $A$. Let us see more in details how the isometry $A$ acts on the geodesic $\gamma(t)$. Recalling that $e^{\pm}=\frac{1\pm \tau}{2}$, we have
\begin{align*}
    A\cdot e^{t}j&=[e^{\frac{\lambda}{2}}+e^{\frac{\mu}{2}}+\tau(e^{\frac{\lambda}{2}}-e^{\frac{\mu}{2}})]e^{j}j[e^{-\frac{\lambda}{2}}+e^{-\frac{\mu}{2}}+\tau(e^{\frac{-\lambda}{2}}-e^{-\frac{\mu}{2}})]^{-1}\\
    &=\frac{1}{4}[e^{\frac{\lambda}{2}}+e^{\frac{\mu}{2}}+\tau(e^{\frac{\lambda}{2}}-e^{\frac{\mu}{2}})]e^{j}j[e^{-\frac{\lambda}{2}}+e^{\frac{\mu}{2}}+\tau(e^{\frac{\lambda}{2}}-e^{\frac{\mu}{2}})]\\
    &=\frac{1}{4}[e^{\frac{\lambda}{2}}+e^{\frac{\mu}{2}}+\tau(e^{\frac{\lambda}{2}}-e^{\frac{\mu}{2}})][e^{\frac{\lambda}{2}}+e^{\frac{\mu}{2}}-\tau(e^{\frac{\lambda}{2}}-e^{\frac{\mu}{2}})]e^{t}j\\
    &=e^{\frac{\lambda+\mu}{2}}e^{t}j
\end{align*}
which shows that the isometry moves points on $\gamma(t)$ by a distance of $\frac{\lambda+\mu}{2}$. \\
By looking at the differential of $A$, we can also analyse the behaviour of a loxodromic isometry on the plane orthogonal to $\gamma$. An orthonormal frame for the tangent space at $\gamma(0)=j$ is given by $\{1, \tau, j\}$, where the first two vectors are orthogonal to $\dot{\gamma}(0)=j$. Its parallel transport at $A\gamma(0)=e^{\frac{\lambda+\mu}{2}}$ is $\{e^{\frac{\lambda+\mu}{2}}, e^{\frac{\lambda+\mu}{2}}\tau, e^{\frac{\lambda+\mu}{2}}j\}$. On the other hand, using the formula for the differential of an element of $\PSL(2, \B)$ provided in the previous section, we have
\[
    dA_{j}(1)=\frac{1}{2}(e^{\mu}+e^{\lambda}+\tau(e^{\lambda}-e^{\mu})) \ .
\]
Therefore, 
\[
    \langle dA_{j}(1), e^{\frac{\lambda+\mu}{2}} \rangle=
    \frac{1}{2}e^{\frac{\lambda+\mu}{2}}(e^{\lambda}+e^{\mu})e^{-\lambda-\mu}=\cosh\Big(\frac{\lambda-\mu}{2}\Big)
\]
which means that $A$ acts by a rotation of hyperbolic angle $\frac{\lambda-\mu}{2}$ on the orthogonal to $\dot{\gamma}$. We can encode both information in only one element of $\B$: we define the $\B$-length of the isomety $A$ as 
\[
    \ell_{\B}(A):=\frac{\lambda+\mu}{2}+\tau\frac{\lambda-\mu}{2} \ .
\]
Notice that this number can be immediately recovered by simply looking at the trace of $A$, namely
\begin{align*}
    \trace(A)&=(e^{\frac{\lambda}{2}}+e^{-\frac{\lambda}{2}})e^{+}+(e^{\frac{\mu}{2}}+e^{-\frac{\mu}{2}})e^{-}\\
    &=2\cosh\Big(\frac{\lambda}{2}\Big)e^{+}+2\cosh\Big(\frac{\mu}{2}\Big)e^{-}=2\cosh\Big(\frac{\lambda}{2}e^{+}+\frac{\mu}{2}e^{-}\Big)
\end{align*}
where the last step can be formally justified by considering the definition of the hyperbolic cosine as power series. We then conclude that
\[
    \ell_{\B}(A)=2\arccosh\Big(\frac{\trace(A)}{2}\Big) \ .
\]
We remark that for loxodromic isometries, the $\B$-length is always a space-like element of $\B$ with positive real part.

\subsubsection{Semi-loxodromic isometries} Up to conjugation, a semi-loxodromic isometry is of the form
\[
 B^{+}=\begin{pmatrix} e^{\frac{\lambda}{2}} & 0 \\ 0 & e^{-\frac{\lambda}{2}} \end{pmatrix}e^{+}+\begin{pmatrix} 1 & b \\ 0 & 1 \end{pmatrix}e^{-}  \ \ \ \text{or} \ \ \ B^{-}=\begin{pmatrix} 1 & b' \\ 0 & 1 \end{pmatrix}e^{+}+\begin{pmatrix} e^{\frac{\lambda}{2}} & 0 \\ 0 & e^{-\frac{\lambda}{2}} \end{pmatrix}e^{-}
\]
for some $b,b' \in \R\setminus \{0\}$ and $\lambda >0$. They have two fixed points in $\Pp\B^{1}$ ($[1,0]$ and $[e^{-}, e^{+}]$ in the first case, and $[1,0]$ and $[e^{+},e^{-}]$ in the second case) which lie on a light-like line in $\Pp\B^{1}$. In analogy with the loxodromic case, we define the $\B$-length of a semi-loxodromic isometry as
\[
    \ell_{\B}(B^{\pm}):=2\arccosh\Big(\frac{\trace(B^{\pm})}{2}\Big)=\frac{\lambda}{2}\pm \tau \frac{\lambda}{2} \ .
\]
Notice that the $\B$-length of a semi-loxodromic isometry is never invertible in $\B$ and its real part is always positive.

\subsubsection{Parabolic isometries}Up to conjugation, a parabolic isometry is given by
\[
    P=\begin{pmatrix} 1 & a \\ 0 & 1 \end{pmatrix}e^{+}+\begin{pmatrix} 1 & b \\ 0 & 1 \end{pmatrix}e^{-} 
\]
for some $a,b \in \R\setminus \{0\}$. Parabolic isometries have a unique fixed points in $\Pp\B^{1}$ and, analogously to parabolic isometries of the hyperbolic space, they leave \emph{horotori} invariant: namely, for every $c>0$, the set $H_{c}=\{ x_{1}+x_{2}\tau+cj \ | \ x_{1},x_{2} \in \R\} \subset V$, compactifies to a torus in $X$ and is invariant under $P$ because
\[
    P(x_{1}+x_{2}\tau+cj)=x_{1}+\frac{a}{2}+\frac{b}{2}+\Big(x_{2}+\frac{a}{2}-\frac{b}{2}\Big)\tau+cj \ .
\]
We can extend the formula for the $\B$-length to parabolic isometries obtaining 
\[
    \ell_{\B}(P):=2\arccosh\Big(\frac{\trace(P)}{2}\Big)=0 \ .
\]

\subsection{Cross ratio in $\Pp\B^{1}$} The classical definition of cross ratio for points in a projective line can also be adapted to $\Pp\B^{1}$, but some attention is needed from the fact that more points lie at infinity.

\begin{defi}We say that two points $p_{1},p_{2} \in \Pp\B^{1}$ are in space-like position if they can be joined by a space-like geodesic in $X$.
\end{defi}

If $p_{1}=[z_{1},1]$ and $p_{2}=[z_{2},1]$ are in particular points of $\B$, Proposition \ref{prop:spacelikegeo} ensures that $p_{1}$ and $p_{2}$ are in space-like position if and only if $z_{1}-z_{2} \in \B^{*}$. Notice that $z_{1}-z_{2}$ is the determinant of the two-by-two matrix with columns $p_{1}$ and $p_{2}$. Moreover, if we change representatives of $p_{1}$ and $p_{2}$ the determinant is multiplied by a $\lambda \in \B^{*}$, and if we act by an element $A \in \PSL(2, \B)$, the new matrix with columns $Ap_{1}$ and $Ap_{2}$ is still invertible. Therefore, in general, two points $p_{1}=[x_{1},y_{1}]$ and $p_{2}=[x_{2},y_{2}]$ are in space-like position if and only if $x_{1}y_{2}-x_{2}y_{1}$ is invertible in $\B$. 

\begin{defi}\label{def:crossratio} Let $p_{1},p_{2},p_{3},p_{4} \in \Pp\B^{1}$ be pairwise in space-like position. We define the cross ratio as
\[
    \cro(p_{1},p_{2},p_{3},p_{4})= \frac{(x_{2}y_{1}-x_{1}y_{2})(x_{4}y_{3}-x_{3}y_{4})}{(x_{1}y_{4}-x_{4}y_{1})(x_{2}y_{3}-x_{3}y_{2})}\ ,
\]
where $p_{i}=[x_{i},y_{i}]$.
\end{defi}

The usual proof that the cross ratio does not depend on the choice of the representatives and is invariant under the action of $\PSL(2,\B)$ adapts verbatim to this setting. We remark that the cross ratio has been chosen so that
\[
  \cro([1,0], [-1,1],[0,1],[z,1])=z \ .   
\]

\section{Augmented deformation space}\label{sec:GHMC}
Let $S$ be a surface of genus $g$ (possibly with punctures) and negative Euler characteristic. In this section we define (regular) globally hyperbolic maximal anti-de Sitter structures on $S \times \R$ focusing, in particular, on the properties of the holonomy of peripheral curves.

\subsection{Globally hyperbolic maximal anti-de Sitter structures} Let $M$ be a $3$-dimensional Lorentzian manifold locally isometric to anti-de Sitter space. We say that $M$ is Globally Hyperbolic (GH) if $M$ contains an embedded Cauchy surface, i.e. a surface that intersects any inextensible causal curve in exactly one point. Moreover, we say that $M$ is Maximal (M) if it is maximal by isometric embeddings, in the sense that if $\varphi:M \rightarrow M'$ is an isometric embedding between globally hyperbolic Lorentzian manifolds that sends Cauchy surfaces to Cauchy surfaces then $\varphi$ is a global isometry. Globally hyperbolicity forces the manifold to be diffeomorphic to a product $S \times \R$ (\cite{MR0270697}). However, once the topological type is fixed, different globally hyperbolic maximal anti-de Sitter structures can be put on the same topological manifold and we can define a deformation space of GHM anti-de Sitter structures over $S\times \R$ as
\[
    \GH(S)=\Bigg\{ \mu=(f,M)  \ \Big| \ \begin{tabular}{cc}
        \text{$M$ is a GHM anti-de Sitter manifold}  \\
         \text{$f: S \times \R \rightarrow M$ is a diffeomorphism}
    \end{tabular}\Bigg\} \Big/ \sim \ ,
\]
where $(f,M) \sim (f',M')$ if and only if $f'\circ f^{-1}$ is homotopic to an isometry from $M$ to $M'$. The moduli space is then the quotient 
\[
    \mathcal{MGH}(S)=\GH(S)/\mathrm{Diffeo}(S\times \R) \ .
\]
Mess (\cite{Mess}) studied the deformation space of Cauchy-compact (C) globally hyperbolic maximal anti-de Sitter manifolds, meaning that he assumed the Cauchy surface $S$ to be closed. In that case, he proved that, if $S$ has genus at least $2$, the holonomy representation $\hol: \pi_{1}(S) \rightarrow \PSL(2,\B)\cong \dPSL$ uniquely determines the structure, and, in particular, found a homeomorphism
\[
    \GH(S) \cong \T(S)\times \T(S)
\]
between the deformation space of GHMC anti-de Sitter structures over $S\times \R$ and two copies of the Teichm\"uller space of $S$ obtained by projecting the holonomy onto each $\PSL(2,\R)$-factor. As a consequence, the holonomy maps every simple closed curve on $S$ to a loxodromic isometry of $X$. \\

More recently, we extended this theory to include non-compact Cauchy surfaces. We introduced (\cite{Tambu_regularAdS}) \emph{regular} GHM anti-de Sitter structures on $S\times \R$, where now $S$ is allowed to have a finite number of punctures, whose holonomy sends any non-peripheral simple closed curve to a loxodromic isometry and peripheral elements to either loxodromic, semi-loxodromic or parabolic. However, in contrast with the case of compact Cauchy surfaces, the holonomy does not determine the structure uniquely. We also need to consider the developing map $\dev: \tilde{M} \rightarrow X$ which identifies the universal cover of $M$ with the domain of dependence of $\D(\Gamma)$ of a curve $\Gamma$ on the boundary at infinity of anti-de Sitter space. It turns out that the curve $\Gamma$, together with the holonomy representation, is sufficient to reconstruct the manifold (\cite{Tambu_regularAdS}). For GHMC anti-de Sitter structures, the curve $\Gamma$ coincides with the limit set of the holonomy representation and is an acausal topological circle with the property that any pair of distinct points is in space-like position. When punctures are allowed, however, and the holonomy of peripheral elements is not always parabolic, the limit set of the holonomy is a Cantor set and globally hyperbolic maximal anti-de Sitter structures with given holonomy are in one-to-one correspondence with achronal equivariant completions of the limit set to a topological circle (\cite{BonSchlGAFA2009}). Regular GHM anti-de Sitter structures correspond to completions of the limit set following this set of rules:
\begin{itemize}
    \item if the holonomy of a peripheral element $\gamma$ is semi-loxodromic, then $\hol(\gamma)$ has two fixed points in $\Pp\B^{1}$ that lies on a light-like segment. We complete the limit set by adding such light-like segments equivariantly.
    \item if the holonomy of a peripheral element $\gamma$ is loxodromic, then $\hol(\gamma)$ leaves four points fixed in $\Pp\B^{1}$ and has a space-like axis. We complete the limit set by joining the endpoints of the axis with a light-like sawtooth, i.e. a "vee" made up of two consecutive light-like segments sharing as a common endpoint either of the other two fixed points of $\hol(\gamma)$. We can distinguish the two cases by looking at the time-orientation of the sawtooth: we say that it is of type $+1$ if it is future-directed, and of type $-1$ if past-directed.
\end{itemize}

\noindent The developing pair $(\dev_{\mu}, \hol_{\mu})$ of a GHM anti-de Sitter structure $\mu$ defines a natural topology on $\GH(S)$: let $\C(\Ein^{1,1})$ denote the space of closed sets in the boundary at infinity of anti-de Sitter space endowed with the Hausdorff topology. We have a natural injective map
\begin{align*}
    \Pi: \GH(S)&\rightarrow (\Hom(\pi_{1}(S), \PSL(2,\B))\times \C(\Ein^{1,1}))/ \PSL(2,\B) \\
    \mu &\mapsto (\hol_{\mu}, \partial_{\infty}(\dev_{\mu}))
\end{align*}
where $\PSL(2,\B)$ acts by conjugation on $\Hom(\pi_{1}(S), \PSL(2,\B))$ and by conformal transformations on the Einstein Universe. We endow $\GH(S)$ with the topology that makes $\Pi$ a homeomorphism onto its image. If $S$ is not closed, we endow the deformation space $\GH(S)^{reg}\subset \GH(S)$ of regular GHM anti-de Sitter structures with the subspace topology. We remark that, under the identification $\PSL(2,\B)\cong \dPSL$, the holonomy of a regular GHM anti-de Sitter structure corresponds to a pair of faithful and discrete representations and any such pair can be realised (\cite{Tambu_regularAdS}).\\

There is also another, more analytic way, of describing the deformation space of GHM anti-de Sitter structures. In the closed case, this is due to Krasnov and Schlenker (\cite{Schlenker-Krasnov}) who proved that $\GH(S)$ can be parameterised by the cotangent bundle of the Teichm\"uller space of $S$. In fact, they showed that, given a hyperbolic metric $h\in \T(S)$ and a holomorphic quadratic differential $q$ on $(S,h)$, there is a unique equivariant maximal embedding $\tilde{\sigma}:\tilde{S}\rightarrow X$ with induced metric
conformal to $h$ and second fundamental form given by the real part of $q$. The quotient of the domain of dependence of the boundary at infinity of $\tilde{\sigma}(\tilde{S})$ by the action of $\pi_{1}(S)$ gives the desired GHMC anti-de Sitter manifold. \\
In the non-compact case, an analogous result was proved in \cite{Tambu_regularAdS}: following a similar strategy we showed that there is a homeomorphism 
\[
    \Psi: \GH(S)^{reg} \rightarrow \mathcal{MQ}_{\leq 2}(S)
\]
where $\mathcal{MQ}_{\leq 2}(S)$ denotes the bundle over Teichm\"uller space of meromorphic quadratic differentials on $S$ with poles of order at most $2$ at the punctures. This description will be useful in order to introduce a natural topology on the augmented deformation space.

\subsection{The augmented deformation space} The augmented deformation space of GHMC anti-de Sitter structures has the purpose of describing all possible ways a sequence of (regular) GHMC anti-de Sitter structures can degenerate on the complement of a multi-curve. For this reason it is naturally stratified, and each stratum depends on the choice of a multi-curve on the surface. Let $S$ be a closed surface of genus $g \geq 2$.

\begin{defi} A multi-curve $\D$ is a collection of simple closed curves on $S$ that are pairwise disjoint, non-homotopic, non-contractible and non-peripheral. We allow $\mathcal{D}=\emptyset$ as a multi-curve. A pair of pants is a maximal multi-curve. 
\end{defi}

Let $\D$ be a multi-curve in $S$ and let $S_{1}, \cdots S_{k}$ be the connected components of $S\setminus \D$. Consider a curve $\gamma \in \D$ and choose an orientation of $\gamma$. Let $S_{i}$ and $S_{j}$ be the connected components of $S\setminus \D$ that lie on the left and on the right of $\gamma$ respectively (we allow for the possibility that $S_{i}=S_{j}$ if $\gamma$ is non-separating). The curve $\gamma$ determines two elements $[\gamma_{i}] \in \pi_{1}(S_{i})$ and $[\gamma_{j}] \in \pi_{1}(S_{j})$. 

\begin{defi} A tuple $(\mu_{1}, \dots , \mu_{k}) \in \prod_{j=1}^{k}\GH(S_{j})^{reg}$ is compatible across $\gamma$ if $\hol_{\mu_{i}}([\gamma_{i}])$ is conjugated to $\hol_{\mu_{j}}([\gamma_{j}])$ and, if they are loxodromic, the sawteeth in the boundary at infinity of the developing maps $\dev_{\mu_{i}}$ and $\dev_{\mu_{j}}$ are of the same type.
\end{defi}

\begin{notation} When $\mu$ is compatible across $\gamma$, we will simply use the notation $\hol_{\mu}(\gamma)$ to mean $\hol_{\mu_{i}}([\gamma])$. In fact, we will be interested only in quantities that are invariant under conjugation, so the choice of the connected component containing $\gamma$ will not matter. We use a similar notation, if the curve $\gamma'$ is entirely contained in one connected component.
\end{notation}

\begin{defi} The \emph{augmented} deformation space of globally hyperbolic maximal anti-de Sitter structures over $S \times \R$ is 
\[
    \GH(S)^{aug}=\bigcup_{\D \text{\ multicurve}} \GH(S,\D)^{reg} \ ,
\]
where $\GH(S,\D)^{reg}$ denotes the space of tuples of GHM anti-de Sitter structures that are compatible across every curve $\gamma \in \D$. 
\end{defi}

A natural way of introducing a topology in this space is by noticing that we can identify $\GH(S)^{aug}$ with the bundle $\mathcal{V}(S)$ of meromorphic quadratic differentials over the augmented Teichm\"uller space $\T(S)^{aug}$ of $S$. We are now going to describe how this can be accomplished. Let $\D$ be a multi-curve in $S$ and let $S_{1}, \dots, S_{k}$ be the connected components of $S\setminus \D$. We denote by $\T(S,\D)\subset \T(S)^{aug}$ the set of all marked complete hyperbolic metrics on $S \setminus \D$. Let $\mathcal{V}_{\D}(S) \subset \mathcal{V}(S)$ be the sub-bundle of meromorphic quadratic differentials over $\T(S,\D)$ with poles of order at most $2$ at the punctures. An element of $\mathcal{V}_{\D}(S)$ is a $2k$-uple $(h_{1}, \dots, h_{k}, q_{1}, \dots, q_{k})$, where $h_{j}$ is a marked complete hyperbolic metric and $q_{j}$ is a meromorphic quadratic differential on $S_{j}$ with poles of order at most $2$ at the punctures. There is also a compatibility condition on the residue (i.e. the coefficient of the term $z^{-2}$ in a Laurent expansion around a pole) of the quadratic differentials. If $\gamma \in \D$ and $S_{i}$ and $S_{j}$ are the (possibly coincident) connected components of $S\setminus \D$ bounding $\gamma$, let $p_{i}$ and $p_{j}$ be the punctures on $S_{i}$ and $S_{j}$ corresponding to $\gamma$. Then the residue of $q_{i}$ and $q_{j}$ at the punctures $p_{i}$ and $p_{j}$ must coincide. It follows from (\cite{Tambu_regularAdS}) that $\mathcal{V}_{\D}$ is in bijection with $\GH(S,\D)$. Therefore, we have a one-to-one correspondence
\[
    \Psi_{\D}: \mathcal{V}_{\D}(S) \rightarrow \GH(S, \D)^{reg}
\]
and we define the topology on the augmented deformation space of GHM anti-de Sitter structures on $S\times \R$ that makes $\Psi^{aug}:=\cup_{\D}\Psi_{\D}: \mathcal{V}(S)\rightarrow \GH(S)^{aug}$ a homeomorphism. The following remarks and lemma show that this is a reasonable topology.

\begin{oss}If $\mathcal{D}=\emptyset$, then $\T(S,\D)=\T(S)$ and the map $\Psi_{D}$ identifies $\GH(S)$ with the cotangent bundle to Teichm\"uller space of $S$, so this stratum inherits the standard topology on $\GH(S)$.
\end{oss}

\begin{oss}\label{rmk:degeneration} By \cite[Theorem A]{Tambu_degeneration} this topology has the property that if a sequence $\mu_{n} \in \GH(S)$ converges to $\mu_{\infty}=(\mu_{\infty, 1}, \dots, \mu_{\infty, k}) \in \GH(S, \D)^{reg}$, then $\hol_{\mu_{n}}$ restricted to each $\pi_{1}(S_{i})$ (where $S_{1}, \dots, S_{k}$ are the connected components of $S\setminus \D$) converges to $\hol_{\mu_{\infty, i}}$ and the curves $\partial_{\infty}(\dev_{\mu_{n}})$ converge to $\partial_{\infty}(\dev_{\mu_{\infty, i}})$, up to the $\PSL(2,\B)$-action. We recall that this means that we can find a sequence of isometries $A_{n} \in \PSL(2,\B)$ such that $A_{n}(\hol_{\mu_{n}})_{|_{\pi_{1}(S_{i})}}A_{n}^{-1}$ converges to $\hol_{\mu_{\infty, i}}$ as representations and $A_{n}\partial_{\infty}(\dev_{\mu_{n}})$ converges to $\partial_{\infty}(\dev_{\mu_{\infty, i}})$ in the Hausdorff topology. Although Theorem A in the reference above is stated only for GHMC anti-de Sitter structure, the same proof applies to regular GHM anti-de Sitter structures, thus the same conclusion holds for sequences $\mu_{n} \in \GH(S,\D')$ with $\emptyset\neq \D'\subset \D$.
\end{oss}

\begin{lemma}\label{lm:degeneration2}Let $\D' \subset \D$ be multi-curves on $S$. Let $S_{1}, \dots, S_{k}$ be the connected components of $S\setminus \D'$ and let $\D_{i}$ be the set of curves of $\D$ contained in $S_{i}$. Let $S_{i, 1}, \dots, S_{i, j_{i}}$ be the connected components of $S_{i}\setminus \D_{i}$. If $\hol_{\mu_{n,i}}$ restricted to each $S_{i,j}$ converges to $\hol_{\mu_{\infty, i,j}}$ and the curves $\partial_{\infty}(\dev_{\mu_{n,i}})$ converge to $\partial_{\infty}(\dev_{\mu_{\infty,i,j}})$, then $\mu_{n}=(\mu_{n,1}, \dots \mu_{n,k}) \in \GH(S,\D')$ converges to $\mu_{\infty}=(\mu_{\infty,1,1}, \dots, \mu_{\infty, k,j_{k}}) \in \GH(S,D)$ in $\GH(S)^{reg}$.
\end{lemma}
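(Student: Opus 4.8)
The statement is exactly the converse of Remark \ref{rmk:degeneration}, which records that convergence in $\GH(S)^{aug}$ forces the holonomies on the pieces and the boundary curves to converge; here the implication must be reversed. The first observation is that $S\setminus\D=\bigsqcup_{i,j}S_{i,j}$, so the hypothesis is precisely that $\hol_{\mu_n}$ restricted to the fundamental group of each component of $S\setminus\D$ converges and that the curves $\partial_{\infty}(\dev_{\mu_n})$ converge in the Hausdorff topology, both up to the $\PSL(2,\B)$-action. Since the topology on $\GH(S)^{aug}$ is transported by the homeomorphism $\Psi^{aug}$ from the metrisable bundle $\mathcal{V}(S)$ of meromorphic quadratic differentials over augmented Teichm\"uller space, it is enough to show that every subsequence of $(\mu_n)$ has a further subsequence converging to $\mu_\infty$.

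The plan is then to combine a precompactness argument with the forward direction of Remark \ref{rmk:degeneration}. Through $\Psi_{\D'}$ the structures $\mu_n$ are represented by points $(h_{n,i},q_{n,i})_i\in\mathcal{V}_{\D'}(S)$. Convergence of the two Fuchsian holonomies underlying each $\hol_{\mu_{n,i}}$ on the components of $S_i\setminus\D_i$ forces, by the classical description of augmented Teichm\"uller space, the conformal metrics $h_{n,i}$ to converge in $\T(S)^{aug}$; the differentials $q_{n,i}$ have poles of order at most two whose residues are controlled by the convergent $\B$-lengths of the peripheral holonomies, and on compact subsurfaces they form a normal family, so after extraction they converge as well. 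Hence some subsequence $\mu_{n_k}$ converges in $\GH(S)^{aug}$ to a structure $\mu'\in\GH(S,\D'')^{reg}$, and since limits can only cut further while no pinching takes place inside the $S_{i,j}$, we have $\D'\subseteq\D''\subseteq\D$.

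It remains to identify $\mu'$. Applying Remark \ref{rmk:degeneration} to $\mu_{n_k}\to\mu'$ gives that the holonomies of $\mu_{n_k}$ on the components of $S\setminus\D''$ converge to those of $\mu'$ and that $\partial_{\infty}(\dev_{\mu_{n_k}})\to\partial_{\infty}(\dev_{\mu'})$, up to conjugation. The same sequence of curves converges, by hypothesis, to the boundary curves of $\mu_\infty$, which carry a light-like sawtooth or segment at every curve of $\D$; comparing the two Hausdorff limits forces these degenerations to appear already in $\partial_{\infty}(\dev_{\mu'})$, so $\D\subseteq\D''$ and therefore $\D''=\D$. On each component $S_{i,j}$ the holonomy and the boundary curve of $\mu'$ then agree with those of $\mu_\infty$ up to conjugation, and since a regular GHM anti-de Sitter structure is determined by its holonomy together with the curve $\partial_{\infty}(\dev)$ (\cite{Tambu_regularAdS}), we conclude $\mu'=\mu_\infty$. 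As the subsequence was arbitrary, $\mu_n\to\mu_\infty$.

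The main obstacle is the precompactness step together with the correct bookkeeping of the cut locus: one must guarantee that along the extracted subsequence no curve outside $\D$ is pinched and that the residue-matching conditions defining $\mathcal{V}_{\D}(S)$ survive in the limit, so that $\mu'$ genuinely lands in the stratum indexed by $\D$ and not in a transverse or deeper one. Equivalently, this is the point where the gluing data across the curves of $\D$ — the type of the sawteeth and the residues at the newly created punctures — must be shown to pass continuously to the limit, which is what upgrades the per-piece convergence of the hypothesis into convergence of the full tuple.
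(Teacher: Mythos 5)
You have correctly located where the work must happen: the topology on $\GH(S)^{aug}$ is transported from the bundle $\mathcal{V}(S)$, so the lemma amounts to showing that convergence of the restricted holonomies and of the curves $\partial_{\infty}(\dev_{\mu_{n,i}})$ implies convergence of the data $(h_{n,i},q_{n,i})$. But the two substitutes you offer for this step do not hold up as stated, and you yourself flag this step as ``the main obstacle'' rather than closing it. First, the metric $h_{n,i}$ appearing in $\mathcal{V}_{\D'}(S)$ is the hyperbolic metric in the conformal class of the induced metric on the equivariant \emph{maximal surface}, not either of the two Fuchsian metrics underlying $\hol_{\mu_{n,i}}$; so convergence of the left and right Fuchsian holonomies does not, ``by the classical description of augmented Teichm\"uller space,'' give convergence of $h_{n,i}$ without first controlling the maximal surfaces. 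Second, the normal-family claim for the $q_{n,i}$ requires locally uniform bounds that you do not establish; the second fundamental form of the maximal surface is the real part of $q_{n,i}$, so again this control has to come from the surfaces themselves. The subsequence-extraction scheme built on these two claims therefore has a genuine hole exactly at its foundation.

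The paper's proof supplies precisely the missing ingredient and, as a consequence, needs no subsequence argument at all: since the curves $\partial_{\infty}(\dev_{\mu_{n,i}})$ converge in the Hausdorff topology, the $\hol_{\mu_{n,i}}$-equivariant maximal surfaces bounding them converge smoothly on compact sets to the maximal surface bounding the limit curve (\cite[Proposition 4.6]{Tambu_poly}), and by uniqueness of the equivariant maximal surface with prescribed boundary (\cite[Lemma 4.2]{Tambu_poly}) the limit is the $\hol_{\mu_{\infty,i,j}}$-equivariant one. Smooth convergence of the surfaces gives convergence of their embedding data, i.e.\ of $(h_{n,i},q_{n,i})$ in $\mathcal{V}(S)$, which is convergence $\mu_{n}\to\mu_{\infty}$ by definition of the topology. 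If you want to salvage your outline, replace the precompactness paragraph with this maximal-surface convergence; once that is done, the identification of the limit and the bookkeeping of the cut locus that occupy your last two paragraphs become unnecessary, since the full sequence already converges to the right point.
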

\begin{proof} Choose a base point $p_{i,j}$ on $S_{i,j}$ that gives an injection of $\pi_{1}(S_{i,j})$ into $\pi_{1}(S_{i})$. Let $\tilde{\sigma_{n,i}}: \tilde{S}_{i} \rightarrow X$ be the $\hol_{\mu_{n,i}}$-equivariant maximal embedding into anti-de Sitter space bounding $\partial_{\infty}(\dev_{\mu_{n,i}})$. By assumption, the sequence of curves $\partial_{\infty}(\dev_{\mu_{n,i}})$ converges in the Hausdorff topology to $\partial_{\infty}(\dev_{\mu_{\infty,i,j}})$. Then the maximal surfaces $\tilde{\sigma_{n,i}}(\tilde{S_{i}}$ converge smoothly on compact sets to the maximal surface bounding $\partial_{\infty}(\dev_{\mu_{\infty,i,j}})$ (cfr. \cite[Proposition 4.6]{Tambu_poly}), which, by uniqueness (cfr. \cite[Lemma 4.2]{Tambu_poly}), is $\hol_{\mu_{\infty,i,j}}$-equivariant. This implies the convergence of the embedding data of the maximal surfaces. 
\end{proof}

\section{Fenchel-Nielsen coordinates}\label{sec:coord}
The first step in defining Fenchel-Nielsen coordinates is to parameterise representations of a pair of pants into $\PSL(2,\B)$. Let \[
    \Gamma_{0,3}=<r,s,t \ | \ tsr=e >
\]
be the fundamental group of a pair of pants $P$, where each generator corresponds to an oriented boundary curve so that $P$ lies on the left of each of them. We want to determine all possible representations of $\Gamma_{0,3}$ into $\PSL(2,\B)$, up to conjugation, under the assumption that the holonomy of a peripheral curve is loxodromic, semi-loxodromic or parabolic. We will call such representations \emph{admissible}. Recalling the cassifications of the isometries of anti-de Sitter space (see Section \ref{sec:model}), an admissible representation of $\Gamma_{0,3}$ into $\PSL(2,\B)$ corresponds to a pair $(\rho^{+},\rho^{-})$ of discrete and faithful representations into $\PSL(2,\R)$. \\

We introduce the following notation: we define $\C^{+}=\{ z \in \B \ | \ \Ree(z)>0, \ |z|^{2}>0 \}$ the positive cone of space-like elements of $\B$ with positive real part. Note that the $\B$-lengths of loxodromic, semi-loxodromic and parabolic isometries take value in $\overline{\C^{+}}$. 

\begin{prop}There is a homeomorphism
\[
    F:\Hom^{adm}(\Gamma_{0,3}, \PSL(2,\B))/\PSL(2,\B) \rightarrow \overline{\C^{+}}^{3}
\]
obtained by associating to each admissible representation $\rho$ the $\B$-lengths $\ell_{\B}(\rho(r))$, $\ell_{\B}(\rho(s))$ and $\ell_{\B}(\rho(t))$ of the generators.
\end{prop}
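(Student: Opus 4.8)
The plan is to use the algebra isomorphism $\PSL(2,\B)\cong\dPSL$ to reduce the statement to the classical parameterisation of the Teichm\"uller space of a pair of pants by its boundary lengths. Writing a representation as $\rho=\rho^{+}e^{+}+\rho^{-}e^{-}$, an admissible $\rho$ is exactly a pair $(\rho^{+},\rho^{-})$ of discrete and faithful representations of $\Gamma_{0,3}$ into $\PSL(2,\R)$ with hyperbolic or parabolic peripheral holonomy, as recalled above. Since $g=g^{+}e^{+}+g^{-}e^{-}$ conjugates $\rho$ to $(g^{+}\rho^{+}(g^{+})^{-1})e^{+}+(g^{-}\rho^{-}(g^{-})^{-1})e^{-}$, conjugation acts independently on the two factors, so the first step is a homeomorphism
\[
\Hom^{adm}(\Gamma_{0,3},\PSL(2,\B))/\PSL(2,\B)\;\cong\;\T(P)\times\T(P),
\]
where $\T(P)$ is the space of conjugacy classes of such $\PSL(2,\R)$-representations, i.e. the Teichm\"uller space of the pair of pants $P$ with cusps allowed.

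The second step is the classical fact that for a single $\PSL(2,\R)$-factor the map sending $\rho^{\pm}$ to its triple of boundary translation lengths $(\lambda_{r}^{\pm},\lambda_{s}^{\pm},\lambda_{t}^{\pm})$ is a homeomorphism $\T(P)\cong[0,\infty)^{3}$: a hyperbolic pair of pants is determined up to isometry by its three boundary lengths (it is assembled from two right-angled hexagons), a vanishing length corresponds to a cusp, and each $\lambda$ is recovered from the trace through $\trace=2\cosh(\lambda/2)$. Translating back via idempotent coordinates, the computation of Section \ref{sec:model} gives, for each peripheral generator $c\in\{r,s,t\}$, the identity $\ell_{\B}(\rho(c))=\lambda_{c}^{+}e^{+}+\lambda_{c}^{-}e^{-}$, so under $ae^{+}+be^{-}\mapsto(a,b)$ the $\B$-length of $\rho(c)$ has coordinates $(\lambda_{c}^{+},\lambda_{c}^{-})$. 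Since $|ae^{+}+be^{-}|^{2}=ab$ and $\Ree(ae^{+}+be^{-})=(a+b)/2$, the cone $\C^{+}$ is the open quadrant $\{a>0,\ b>0\}$ and hence $\overline{\C^{+}}\cong[0,\infty)^{2}$. Reorganising the product of the two factor-wise homeomorphisms, $\overline{\C^{+}}^{3}\cong([0,\infty)^{2})^{3}\cong([0,\infty)^{3})^{2}$, produces $F$; continuity of $F$ is immediate from $\ell_{\B}=2\arccosh(\trace/2)$ and continuity of the trace, while continuity of $F^{-1}$ follows from the classical parameterisation.

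The algebraic splitting and the trace formula are routine; the work is concentrated along the degenerate strata. I expect the main obstacle to be checking that the length parameterisation remains a homeomorphism as boundary lengths reach $0$, that is, across the transitions loxodromic $\to$ semi-loxodromic $\to$ parabolic corresponding to the edges and the corner of $\overline{\C^{+}}$: when $\lambda_{c}^{+}\to 0$ the factor $\rho^{+}(c)$ degenerates from hyperbolic to parabolic, and one must produce representations depending continuously on the length data and rule out convergence pathologies of the quotient topology at the fully parabolic corner. This amounts to verifying properness of the conjugation action on the discrete and faithful locus and Hausdorffness of the resulting quotient, so that the product decomposition above is genuinely a homeomorphism.
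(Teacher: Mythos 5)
Your proposal is correct and follows essentially the same route as the paper: both split $\rho$ into $(\rho^{+},\rho^{-})$ via $\PSL(2,\B)\cong\dPSL$, invoke the classical parameterisation of a hyperbolic pair of pants by its three boundary lengths on each factor, and observe that the six real lengths repackage exactly into the three $\B$-lengths since $\overline{\C^{+}}\cong[0,\infty)^{2}$ in idempotent coordinates. The only difference is that you explicitly flag the continuity issues at the degenerate (parabolic) strata, which the paper dismisses as classical and leaves to the reader.
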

\begin{proof} Classical hyperbolic geometry tells us that a faithful and discrete representation $\rho^{+}$ of $\Gamma_{0,3}$ into $\PSL(2,\R)$ is uniquely determined, up to conjugation, by the non-negative real numbers
\[
    \Bigg(2\arccosh\Big(\frac{\trace\big(\rho^{+}(r)\big)}{2}\Big), 2\arccosh\Big(\frac{\trace\big(\rho^{+}(s)\big)}{2}\Big), 2\arccosh\Big(\frac{\trace\big(\rho^{+}(t)\big)}{2}\Big)\Bigg) \ .
\]
Therefore, a pair of faithful and discrete representations $(\rho^{+}, \rho^{-})$ is uniquely determined by the six-tuple
\begin{equation}\label{eq:sixtuple}
    \Bigg(2\arccosh\Big(\frac{\trace\big(\rho^{\pm}(r)\big)}{2}\Big), 2\arccosh\Big(\frac{\trace\big(\rho^{\pm}(s)\big)}{2}\Big), 2\arccosh\Big(\frac{\trace\big(\rho^{\pm}(t)\big)}{2}\Big)\Bigg)\ .
\end{equation}
We denote by $\ell(\rho^{\pm}(x))=2\arccosh(\frac{\trace(\rho^{\pm}(x))}{2})$ for $x \in \Gamma_{0,3}$ the hyperbolic length of the element $x$. Recalling that the $\B$-length of an isometry $\rho(x)=\rho^{+}(x)e^{+}=\rho^{-}(x)e^{-}$ is given by
\[
    \ell_{\B}(\rho(x))=\frac{\ell(\rho^{+}(x))+\ell(\rho^{-}(x))}{2}+\tau\frac{\ell(\rho^{+}(x))-\ell(\rho^{-}(x))}{2} \ ,
\]
it is easy to check that the data of the $\B$-lengths $(\ell_{\B}(\rho(r)),\ell_{\B}(\rho(s)),\ell_{\B}(\rho(t)))$ is equivalent to the data of the six non-negative numbers in Equation (\ref{eq:sixtuple}), hence the map $F$ is injective. Surjectivity follows from the fact that any non-negative number can be realised as hyperbolic length of a boundary curve of a pair of pants. Continuity of $F$ and $F^{-1}$ are also classical facts from hyperbolic geometry and we leave the details to the reader.
\end{proof}

We now want to glue together representations of $\Gamma_{0,3}$ in order to obtain a representation of the fundamental group of a surface with boundary. There will not be a unique way of doing so, and the different outcomes will depend on a $\B$-twist-bend parameter. Let $\rho_{1},\rho_{2}: \Gamma_{0,3} \rightarrow \PSL(2,\B)$ be two admissible representations. We think of having two copies of $\Gamma_{0,3}$ with generators $r_{1},s_{1},t_{1}$ and $r_{2},s_{2},t_{2}$ respectively, defined as before. Algebraically, gluing $\rho_{1}$ and $\rho_{2}$ together along two generators, say $r_{1}$ and $r_{2}$ corresponds to constructing a representation $\rho: \Gamma_{0,4} \rightarrow \PSL(2,\B)$ where $\Gamma_{0,4}$ is the fundamental group of the four punctured sphere $S_{0,4}$ which can be obtained as amalgamated product of the two copies of $\Gamma_{0,3}$ that identifies the generator $r_{1}$ with $r_{2}^{-1}$. This can be done only if there exists a matrix $A \in \PSL(2,\B)$ such that $A\rho_{2}(r_{2})A^{-1}=\rho_{1}(r_{1})^{-1}$. Moreover, since in the new group $\Gamma_{0,4}$ the generator $r_{1}$ will represent a non-peripheral simple closed curve in $S_{0,4}$, the resulting representation can be the holonomy of a GHM anti-de Sitter structure on $S_{0,4}\times \R$ only if $\rho_{1}(r_{1})$ is loxodromic. Up to conjugation, we can assume that the axis of $\rho_{1}(r_{1})$ is the space-like geodesic $\gamma(t)=e^{t}j$ with repelling fixed point $[0,1]\in \Pp\B^{1}$ and attracting fixed point $[1,0] \in \Pp\B^{1}$, and that the repelling fixed point of $\rho_{1}(s_{1})$ is $[-1,1] \in \Pp\B^{1}$. Notice that the matrix $A$ such that $A\rho_{2}(r_{2})A^{-1}=\rho_{1}(r_{1})^{-1}$ is not uniquely determined, as we can replace $A$ with $ZA$ for any $Z \in \PSL(2,\B)$ in the centraliser of $\rho_{1}(r_{1})$. We can pick out a unique $A$ by requiring that the attracting fixed point of $A\rho_{2}(t_{2})A^{-1}$ is $[1,1] \in \Pp\B^{1}$. For every $Z \in \PSL(2,\B)$ in the centraliser of $\rho_{1}(r_{1})$, the representation $\rho_{Z}:\Gamma_{0,4} \rightarrow \PSL(2,\B)$ given by
\[
    \rho_{Z}(x)=\begin{cases}
                \rho_{1}(x) \ \ \ \ \ \ \ \ \ \ \ \ \ \ \ \ \ \ \ \ \ \ \ \ \ \ \ \ \text{if $x \in <r_{1},s_{1},t_{1}>$}\\
                ZA\rho_{2}(x)A^{-1}Z^{-1} \ \ \ \ \ \ \ \ \ \ \ \ \ \text{if $x \in <r_{2},s_{2},t_{2}>$}
            \end{cases}
\]
is well-defined and the two associated representations $\rho_{Z}^{\pm}$ into $\PSL(2,\R)$ are still faithful and discrete, thus $\rho_{Z}$ is the holonomy of a GHM anti-de Sitter structure on $S_{0,4}\times \R$ for any such choice of $Z$.
By our re-normalisation, any isometry $Z$ in the centraliser of $\rho_{1}(r_{1})$ can be written as
\[
    Z=\begin{pmatrix} e^{\frac{\lambda}{2}} & 0 \\ 0 & e^{-\frac{\lambda}{2}} \end{pmatrix}e^{+}+\begin{pmatrix} e^{\frac{\mu}{2}} & 0 \\ 0 & e^{-\frac{\mu}{2}} \end{pmatrix}e^{-}
\]
for some real numbers $\lambda$ and $\mu$. We know that $Z$ acts by translation on $\gamma(t)$ by a (signed) distance of $\frac{\lambda+\mu}{2}$ and by a rotation of hyperbolic angle $\frac{\lambda-\mu}{2}$ on the orthogonal to $\gamma(t)$. We define the $\B$-twist-bend parameter as
\[
    \tw_{\B}(\rho_{Z}(r_{1})):=\frac{\lambda+\mu}{2}+\tau\frac{\lambda-\mu}{2} \ .
\]
Notice that $\tw_{\B}(\cdot)$ can take any value in $\B$ and uniquely determines $Z$. It is also useful to notice that the attracting fixed point of $ZA\rho_{2}(t_{2})A^{-1}Z^{-1}$ is 
\begin{align*}
    Z[1,1]&=Z[e^{+}+e^{-}, e^{+}+e^{-}]=Z([1,1]e^{+}+[1,1]e^{-})\\
    &=[e^{\frac{\lambda}{2}},e^{-\frac{\lambda}{2}}]e^{+}+[e^{\frac{\mu}{2}}, e^{-\frac{\mu}{2}}]e^{-}\\
    &=[e^{\frac{\lambda}{2}}+e^{\frac{\mu}{2}}+\tau(e^{\frac{\lambda}{2}}-e^{\frac{\mu}{2}}), e^{-\frac{\lambda}{2}} +e^{-\frac{\mu}{2}}+\tau(e^{-\frac{\lambda}{2}}-e^{-\frac{\mu}{2}})]\\
    &=[e^{\lambda}+e^{\mu}+\tau(e^{\lambda}-e^{\mu}),2]=[e^{\lambda}e^{+}+e^{\mu}e^{-},1]
\end{align*}
This shows that we can express the $\B$-twist-bend parameter as a cross ratio: in fact, if we denote by $\rho_{Z}(x)^{\pm}$ the attracting and repelling fixed point of $\rho_{Z}(x)$, we have
\begin{align*}
    \log&(\cro(\rho_{Z}(r_{1})^{+}, \rho_{Z}(s_{1})^{-}, \rho_{Z}(r_{1})^{-}, \rho_{Z}(t_{2})^{+}))\\
    &=\log(\cro([1,0], [-1,1],[0,1],Z[1,1])) \\
    &=\log(e^{\lambda}e^{+}+e^{\mu}e^{-})=\lambda e^{+}+\mu e^{-}
    =\tw_{\B}(\rho_{Z}(r_{1})) \ .
\end{align*}

In order to obtain a representation of a closed surface group starting from a collection of representations of $\Gamma_{0,3}$ we also need to be able to glue two boundary curves of the same pair of pants. It is sufficient to describe how to obtain a representation of the fundamental group $\Gamma_{1,1}$ of a one-holed torus starting from a representation $\rho:\Gamma_{0,3}\rightarrow \PSL(2,\B)$. Let us suppose that we want to glue the curve represented by $r$ with that represented by $s$. As before, this can be done only if $\rho(r)$ and $\rho(s)$ are conjugated in $\PSL(2,\B)$ and are loxodromic. This means that we can find two loxodromic matrices $A,B \in \PSL(2,\B)$ such that $\rho(r)=A$ and $B\rho(s)B^{-1}=A^{-1}$. Notice that the element $B$ is not uniquely determined, because we can change $B$ by $ZB$ with $Z$ in the centraliser of $A$. Now, the group generated by $A$ and $ZB$ is the $HNN$-extension of the group generated by $\rho(s)$ and $\rho(r)$, thus it is isomorphic to $\Gamma_{1,1}$, where $ZB$ is the image of a simple loop intersecting the glued curve once. Given $\rho$, we obtain in this way a family of representations $\rho_{Z}:\Gamma_{1,1}\rightarrow \PSL(2,B)$ which are the holonomy of GHM anti-de Sitter structures. In order to define a $\B$-twist-bend parameter associated to $\rho_{Z}$, we first have to fix a canonical choice of $B$. To this aim, we consider the cross ratio
\[
    \cro(\rho(r)^{+}, B^{-1}Z^{-1}\rho(r)^{+}, \rho(r)^{-}, ZB\rho(r)^{-}) \ .
\]
Up to conjugation, we can assume that $\rho(r)^{+}=[1,0]\in \Pp\B^{1}$ and $\rho(r)^{-}=[0,1] \in \Pp\B^{1}$. Then, if $B^{-1}[1,0]=[x^{+}e^{+}+x^{-}e^{-}, y^{+}e^{+}+y^{-}e^{-}]$ and $B[0,1]=[z^{+}e^{+}+z^{-}e^{-}, w^{+}e^{+}+w^{-}e^{-}]$ and we write
\[
    Z=\begin{pmatrix} e^{\frac{\lambda}{2}} & 0 \\ 0 & e^{-\frac{\lambda}{2}} \end{pmatrix}e^{+}+\begin{pmatrix} e^{\frac{\mu}{2}} & 0 \\ 0 & e^{-\frac{\mu}{2}} \end{pmatrix}e^{-}
\]
we have that $B^{-1}Z^{-1}[1,0]=B[1,0]$ and $ZB[0,1]=[e^{\frac{+\lambda}{2}}z^{+}e^{+}+e^{+\frac{\mu}{2}}z^{-}e^{-}, e^{-\frac{\lambda}{2}}w^{+}e^{+}+e^{-\frac{\mu}{2}}w^{-}e^{-}]$
hence the cross ratio $\cro(\rho(r)^{+}, B^{-1}Z^{-1}\rho(r)^{+}, \rho(r)^{-}, ZB\rho(r)^{-})$ is equal to
\[
    \frac{(e^{\frac{\lambda}{2}}z^{+}e^{+}+e^{\frac{\mu}{2}}z^{-}e^{-})(y^{+}e^{+}+y^{-}e^{-})}{(e^{-\frac{\lambda}{2}}w^{+}e^{+}+e^{-\frac{\mu}{2}}w^{-}e^{-})(w^{+}e^{+}+w^{-}e^{-})}
    =\frac{z^{+}y^{+}}{w^{+}x^{+}}e^{\lambda}e^{+}+\frac{z^{-}y^{-}}{w^{-}x^{-}}e^{\mu}e^{-}
\]
and there is a unique choice of $\lambda$ and $\mu$, so that the above cross ratio equals $1$. Therefore, we can fix a canonical choice $B_{0}=ZB$ so that
\[
    \cro(\rho(r)^{+}, B_{0}^{-1}\rho(r)^{+}, \rho(r)^{-}, B_{0}\rho(r)^{-})=1
\]
and then for any $Z$ in the centraliser of $\rho(r)$ we define the $\B$-twist-bend parameter as
\begin{align*}
    \tw_{\B}(\rho_{Z}(r))&=\log(\cro(\rho(r)^{+}, B_{0}^{-1}Z^{-1}\rho(r)^{+}, \rho(r)^{-}, ZB_{0}\rho(r)^{-}))\\ &=\frac{\lambda+\mu}{2}+\tau\frac{\lambda-\mu}{2} \ ,
\end{align*}
if, as before, we write
\[
    Z=\begin{pmatrix} e^{\frac{\lambda}{2}} & 0 \\ 0 & e^{-\frac{\lambda}{2}} \end{pmatrix}e^{+}+\begin{pmatrix} e^{\frac{\mu}{2}} & 0 \\ 0 & e^{-\frac{\mu}{2}} \end{pmatrix}e^{-} \ .
\]
\\

We can now define Fenchel-Nielsen coordinates for the holonomy representation of a GHMC anti-de Sitter structure. Let $S$ be a closed, connected, oriented surface of genus $g\geq 2$. Fix a pants decomposition $\PP=\{\gamma_{1}, \cdots, \gamma_{3g-3}\}$ on $S$. Let $\rho:\pi_{1}(S)\rightarrow \PSL(2,\B)$ be the holonomy representation of a GHMC anti-de Sitter structure on $S\times \R$. The restriction of $\rho$ to each pair of pants gives a collection of representations of $\Gamma_{0,3}$ into $\PSL(2,\B)$, with the property that every peripheral element is sent to a loxodromic isometry. For any $\gamma \in \PP$ there is also a well-defined $\B$-twist bend parameter that encodes the gluing along a curve $\gamma$ as explained before. 
\begin{prop} The map
\begin{align*}
    \Phi_{\PP}:\GH(S) &\rightarrow (\C^{+}\times \B)^{3g-3}  \\
                \rho &\mapsto (\ell_{B}(\rho(\gamma_{1})), \tw_{\B}(\rho(\gamma_{1})), \dots, \ell_{B}(\rho(\gamma_{3g-3})), \tw_{\B}(\rho(\gamma_{3g-3})))
\end{align*}
is a homeomorphism.
\end{prop}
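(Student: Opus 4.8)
The plan is to reduce the statement to the classical Fenchel--Nielsen coordinate theorem on Teichm\"uller space by exploiting the idempotent decomposition of $\B$ together with Mess's parametrisation $\GH(S) \cong \T(S) \times \T(S)$. First I would record how the target space splits: under the $\R$-algebra isomorphism $\B \cong \R \oplus \R$ sending $a e^{+} + b e^{-} \mapsto (a,b)$, a point $z = a e^{+} + b e^{-}$ lies in $\C^{+}$ exactly when $\Ree(z) = \tfrac{a+b}{2} > 0$ and $|z|^{2} = ab > 0$, which together force $a, b > 0$; hence $\C^{+} \cong \R_{>0} \times \R_{>0}$ and $\B \cong \R \times \R$. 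Reordering the resulting coordinates identifies $(\C^{+} \times \B)^{3g-3}$ with $(\R_{>0} \times \R)^{3g-3} \times (\R_{>0} \times \R)^{3g-3}$, i.e. with two copies of the standard Fenchel--Nielsen parameter space for $\T(S)$.

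Next I would identify $\Phi_{\PP}$ with the product of the two classical coordinate maps. Writing $\rho = \rho^{+} e^{+} + \rho^{-} e^{-}$ for the two $\PSL(2,\R)$-factors of the holonomy, the trace computation preceding the statement shows $\ell_{\B}(\rho(\gamma_{i})) = \ell(\rho^{+}(\gamma_{i})) e^{+} + \ell(\rho^{-}(\gamma_{i})) e^{-}$, so in the idempotent basis the $\B$-length is precisely the pair of hyperbolic lengths, both positive because every curve of $\PP$ is loxodromic for a GHMC structure. For the twist, the gluing construction is carried out entirely inside $\PSL(2,\B)$, but every ingredient --- the centraliser element $Z = \diag(e^{\lambda/2}, e^{-\lambda/2}) e^{+} + \diag(e^{\mu/2}, e^{-\mu/2}) e^{-}$, the normalisation of axes and fixed points, and the cross ratio in $\Pp\B^{1}$ --- decomposes component-by-component under $\Pp\B^{1} \cong \R\Pp^{1} \times \R\Pp^{1}$. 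Consequently $\tw_{\B}(\rho(\gamma_{i})) = \lambda_{i} e^{+} + \mu_{i} e^{-}$, where $\lambda_{i}$ and $\mu_{i}$ are exactly the real twist amounts by which the two factors $\rho^{\pm}$ are sheared across $\gamma_{i}$, normalised by the condition $\cro = 1$. Thus, read in the idempotent basis, $\Phi_{\PP}$ is nothing but the pair $(\mathrm{FN}^{+}, \mathrm{FN}^{-})$, where $\mathrm{FN}^{\pm}$ is the classical Fenchel--Nielsen map of the $\PSL(2,\R)$-representation $\rho^{\pm}$ associated with the same pants decomposition $\PP$, possibly after a fixed reparametrisation of each twist coordinate which is irrelevant for the homeomorphism claim.

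With this identification in place, the three properties follow formally. Injectivity and surjectivity reduce to the fact that each $\mathrm{FN}^{\pm} : \T(S) \to (\R_{>0} \times \R)^{3g-3}$ is a bijection --- any prescribed positive lengths and real twists are realised by a unique marked hyperbolic structure --- combined with Mess's theorem that the pair $(\rho^{+}, \rho^{-}) \in \T(S) \times \T(S)$ determines and is determined by the GHMC structure. For the topology I would use that $\Pi$ makes $\GH(S)$ homeomorphic to its image of holonomies, that Mess's map $\GH(S) \to \T(S) \times \T(S)$ is a homeomorphism for this topology, and that each $\mathrm{FN}^{\pm}$ is a homeomorphism onto its image; since the reshuffling of factors and the algebra isomorphism $\B \cong \R \oplus \R$ are homeomorphisms, $\Phi_{\PP}$ and its inverse are continuous.

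The main obstacle is the twist comparison: one must check carefully that the $\B$-twist-bend parameter, defined intrinsically through the centraliser and the $\Pp\B^{1}$-cross ratio with the normalisation $\cro = 1$, really does split in the idempotent basis as the pair of classical hyperbolic twists, rather than introducing some coupling between the two factors. Once the compatibility of the whole gluing apparatus with the splitting $\Pp\B^{1} \cong \R\Pp^{1} \times \R\Pp^{1}$ is established --- which is essentially the content of the cross-ratio computations carried out above the statement --- the remainder is a routine transfer of the classical Fenchel--Nielsen homeomorphism through the algebra isomorphism.
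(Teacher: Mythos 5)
Your proposal is correct and follows essentially the same route as the paper: the paper's proof likewise invokes Mess's identification $\GH(S)\cong\T(S)\times\T(S)$ and observes that $\ell_{\B}$ and $\tw_{\B}$ decompose (in your notation, $\ell_{\B}(\rho(\gamma_i))=\ell(\rho^{+}(\gamma_i))e^{+}+\ell(\rho^{-}(\gamma_i))e^{-}$ and similarly for the twist) into the classical Fenchel--Nielsen coordinates of the two $\PSL(2,\R)$-factors. Your write-up simply makes explicit the verification, left implicit in the paper, that $\C^{+}\cong\R_{>0}\times\R_{>0}$ and that the cross-ratio normalisation of the twist splits component-by-component under $\Pp\B^{1}\cong\R\Pp^{1}\times\R\Pp^{1}$.
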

\begin{proof} The claim follows from Mess' parameterisation of $\GH(S)$ as $\T(S)\times \T(S)$ by noticing that, if $\rho=(\rho^{+},\rho^{-})$, then
\[
    \ell_{\B}(\rho(\gamma_{i}))=\frac{\ell(\rho^{+}(\gamma_{i}))+\ell(\rho^{-}(\gamma_{i}))}{2}+\tau \frac{\ell(\rho^{+}(\gamma_{i}))-\ell(\rho^{-}(\gamma_{i}))}{2} 
\]
and
\[
    \tw_{B}(\rho(\gamma_{i}))=\frac{\tw(\rho^{+}(\gamma_{i}))+\tw(\rho^{-}(\gamma_{i}))}{2}+\tau \frac{\tw(\rho^{+}(\gamma_{i}))-\tw(\rho^{-}(\gamma_{i}))}{2} \ , 
\]
where $\ell(\rho^{\pm}(\gamma_{i}))$ and $\tw(\rho^{\pm}(\gamma_{i}))$ are the classical Fenchel-Nielsen coordinates in Teichm\"uller space.
\end{proof}

We can generalise the above result to regular GHM anti-de Sitter structures. Suppose now that $S$ is a surface with $n$ punctures, genus $g$ and negative Euler characteristic. Fix a pair decomposition $\PP=\{\gamma_{1}, \dots, \gamma_{3g-3+n}\}$ of $S$ and let $\sigma_{1}, \dots \sigma_{n}$ be peripheral curves. Let $\hol_{\mu}: \pi_{1}(S)\rightarrow \PSL(2,\B)$ be the holonomy representation of a regular GHM anti-de Sitter structure $\mu$ on $S\times \R$. For every curve in $\PP$ we can define, as before, a $\B$-length taking value in $\C^{+}$ and a $\B$-twist-bend parameter. On the other hand, for each curve $\sigma_{i}$ we can define a $\B$-length which will now take value in $\overline{\C^{+}}$ because $\hol_{\mu}(\sigma_{i})$ may be semi-loxodromic or parabolic as well, but the $\B$-twist-bend is not defined. Moreover, since the holonomy does not determine the structure uniquely, we introduce a parameter $\epsilon_{\mu}(\sigma_{i})$ for every $i=1, \dots, n$ defined as follows:
\begin{itemize}
    \item $\epsilon_{\mu}(\sigma_{i})=0$ if $\hol_{\mu}(\sigma_{i})$ is semi-loxodromic or parabolic;
    \item $\epsilon_{\mu}(\sigma_{i})=+1$ if $\hol_{\mu}(\sigma_{i})$ is loxodromic and its attracting and repelling fixed points are connected in $\partial_{\infty}(\dev_{\mu_{i}})$ by a future-directed sawtooth;
    \item $\epsilon_{\mu}(\sigma_{i})=-1$ if $\hol_{\mu}(\sigma_{i})$ is loxodromic and its attracting and repelling fixed points are connected in $\partial_{\infty}(\dev_{\mu_{i}})$ by a past-directed sawtooth.
\end{itemize}

\begin{teo}\label{thm:coord}
The map
\begin{align*}
    \Phi_{\PP}^{reg}:\GH(S)^{reg} &\rightarrow (\C^{+}\times \B)^{3g-3+n}\times (\overline{\C^{+}}\times \R)^{n}\\
        \mu &\mapsto \prod_{i=1}^{3g-3+n}(\ell_{\B}(\hol_{\mu}(\gamma_{i})), \tw_{\B}(\hol_{\mu}(\gamma_{i}))) \prod_{i=1}^{n}(\ell_{\B}(\hol_{\mu}(\sigma_{i})), \delta_{\mu}(\sigma_{i})) \ ,
\end{align*}
where $\delta_{\mu}(\sigma_{i})=\epsilon_{\mu}(\sigma_{i})|\ell_{\B}(\hol_{\mu}(\sigma_{i}))|$ is a homeomorphism  onto the image.
\end{teo}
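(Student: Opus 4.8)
The plan is to factor the map through the two $\PSL(2,\R)$-components, reducing the holonomy part to classical Fenchel--Nielsen theory and treating the completion data separately. Recall that under $\PSL(2,\B)\cong\dPSL$ the holonomy of a regular GHM structure $\mu$ splits as $\hol_{\mu}=(\rho^{+}_{\mu},\rho^{-}_{\mu})$ with both $\rho^{\pm}_{\mu}$ discrete and faithful, hence a pair of complete hyperbolic structures on $S$ whose ends are cusps or funnels, and that $\mu$ is determined by $\hol_{\mu}$ together with the equivariant achronal completion $\partial_{\infty}(\dev_{\mu})$. For a regular structure the latter is pinned down, by the rules recalled in Section~\ref{sec:GHMC}, by the choice of sawtooth type $\epsilon_{\mu}(\sigma_{i})\in\{\pm1\}$ for each peripheral curve whose holonomy is loxodromic, while it is forced in the semi-loxodromic and parabolic cases. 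I would therefore first record that $\mu$ is equivalent to the datum of the pair $(\rho^{+}_{\mu},\rho^{-}_{\mu})$ together with these finitely many signs, and then check that $\Phi^{reg}_{\PP}$ reads off exactly this information.

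For the holonomy part I would use the idempotent isomorphism $\B\cong\R\oplus\R$, $z=z^{+}e^{+}+z^{-}e^{-}\mapsto(z^{+},z^{-})$, a linear homeomorphism carrying $\C^{+}$ onto $\R_{>0}\times\R_{>0}$ and $\overline{\C^{+}}$ onto $\R_{\geq0}\times\R_{\geq0}$. Under it the identities recorded in Section~\ref{sec:coord},
\[
\ell_{\B}(\hol_{\mu}(\gamma))=\ell(\rho^{+}(\gamma))e^{+}+\ell(\rho^{-}(\gamma))e^{-},\qquad \tw_{\B}(\hol_{\mu}(\gamma))=\tw(\rho^{+}(\gamma))e^{+}+\tw(\rho^{-}(\gamma))e^{-},
\]
show that the interior coordinates $(\ell_{\B},\tw_{\B})$ and the peripheral coordinate $\ell_{\B}$ are nothing but the pair of classical Fenchel--Nielsen coordinates of $\rho^{+}$ and $\rho^{-}$ packaged into $\B$. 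The classical theorem that Fenchel--Nielsen coordinates give a homeomorphism from the Teichm\"uller space of a surface with cusps or geodesic boundary onto $(\R_{>0}\times\R)^{3g-3+n}\times\R_{\geq0}^{\,n}$, applied to each factor, then yields that the holonomy part of $\Phi^{reg}_{\PP}$ is a homeomorphism onto $(\C^{+}\times\B)^{3g-3+n}\times\overline{\C^{+}}^{\,n}$; continuity here follows from continuous dependence of traces and of loxodromic fixed points.

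It remains to analyse $\delta_{\mu}(\sigma_{i})=\epsilon_{\mu}(\sigma_{i})\,|\ell_{\B}(\hol_{\mu}(\sigma_{i}))|$. A direct computation from the normal forms in Section~\ref{sec:model} gives $|\ell_{\B}|=\lambda\mu>0$ when $\sigma_{i}$ is loxodromic and $|\ell_{\B}|=0$ when it is semi-loxodromic or parabolic; hence $\ell_{\B}(\hol_{\mu}(\sigma_{i}))$ alone detects the type ($\inte \C^{+}$ loxodromic, $\partial\C^{+}\setminus\{0\}$ semi-loxodromic, $0$ parabolic), the sign of $\delta_{\mu}$ recovers the sawtooth type wherever it is defined, and $\delta_{\mu}=0$ precisely on $\partial\C^{+}$. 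This gives injectivity at once, since equal coordinates force equal holonomies and equal completions, and it identifies the image fibrewise as the two sheets $\delta=\pm|\ell_{\B}|$ over $\inte \C^{+}$ glued along $\delta=0$ over $\partial\C^{+}$. The crucial observation is that $\delta_{\mu}$ is continuous although $\epsilon_{\mu}$ is not: if $\hol_{\mu}(\sigma_{i})$ is loxodromic the sawtooth type is locally constant under Hausdorff convergence of $\partial_{\infty}(\dev_{\mu})$, so $\delta_{\mu}\to\epsilon_{\mu}|\ell_{\B}|$; and if $\hol_{\mu}(\sigma_{i})$ degenerates to semi-loxodromic or parabolic then $|\ell_{\B}|\to0$ forces $\delta_{\mu}\to0$ regardless of $\epsilon_{\mu}\in\{\pm1\}$. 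This is exactly why one multiplies $\epsilon_{\mu}$ by $|\ell_{\B}|$, and it shows $\Phi^{reg}_{\PP}$ is continuous for the topology defined by $\Pi$.

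The main obstacle is the continuity of the inverse along the boundary strata. Recovering the holonomy from a convergent sequence of coordinates is immediate from the Fenchel--Nielsen homeomorphism applied to both factors, and recovering the completion is immediate over the open locus where $\sigma_{i}$ stays loxodromic, since there $\delta_{\mu}$ is bounded away from $0$ and its sign, hence the sawtooth type, is eventually constant. The delicate case is when the coordinates approach a point with $\ell_{\B}(\sigma_{i})\in\partial\C^{+}$: one must show that, whatever the sawtooth types of the approximating loxodromic structures, the curves $\partial_{\infty}(\dev_{\mu_{n}})$ converge in the Hausdorff topology to the completion prescribed for a semi-loxodromic (a light-like segment) or parabolic (a single fixed point) end. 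I would prove this by the mechanism of Lemma~\ref{lm:degeneration2}: the equivariant maximal surfaces converge smoothly on compact sets (cf. \cite[Proposition 4.6]{Tambu_poly}), forcing their boundary curves to converge, and one checks directly that as a loxodromic element degenerates two of its four fixed points coalesce, its space-like axis becomes a light-like segment, and both possible sawteeth collapse onto that segment. Combined with Remark~\ref{rmk:degeneration}, this yields the required Hausdorff convergence and hence continuity of $(\Phi^{reg}_{\PP})^{-1}$ onto the image.
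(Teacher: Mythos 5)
Your proposal is correct and follows essentially the same route as the paper: the holonomy coordinates reduce to two copies of classical Fenchel--Nielsen theory via the idempotent splitting $\B\cong\R\oplus\R$, continuity of $\delta_{\mu}=\epsilon_{\mu}|\ell_{\B}|$ is handled by the same two-case argument (locally constant sawtooth type when the limit is loxodromic, $|\ell_{\B}|\to 0$ otherwise), and continuity of the inverse at degenerating peripheral holonomies rests on the same observation that the fixed points collide in pairs and both possible sawteeth collapse onto the limiting light-like segment or point. The one caveat is that your appeal to smooth convergence of the equivariant maximal surfaces via \cite[Proposition 4.6]{Tambu_poly} is circular as phrased --- that proposition takes Hausdorff convergence of the boundary curves as its hypothesis, which is exactly what you are trying to establish --- but this is harmless because the direct fixed-point collision argument you also give is sufficient and is what the paper actually uses.
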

\begin{proof}Let $p=(x_{1}, \dots, x_{3g-3+n}, y_{1}, \dots, y_{3g-3+n}, z_{1}, \dots z_{n}, w_{1}, \dots w_{n})$ be a point in the target space such that $w_{i}^{2}=|z_{i}|^{2}$ for every $i=1, \dots, n$. We denote by $E$ the subset of points in $(\C^{+}\times \B)^{3g-3+n}\times (\overline{\C^{+}}\times \R)^{n}$ satisfying these relations. We are going to show that every point of $E$ can be uniquely realised as the image of a GHM anti-de Sitter structure under $\Phi_{\PP}^{reg}$, thus showing that $\Phi_{\PP}^{reg}$ is injective with image $E$. Notice, in fact, that the image of $\Phi_{\PP}^{reg}$ is necessarily contained in $E$. \\
Cutting the surface along all the curves $\gamma_{i}$ we obtain a collection of pairs of pants with boundary curves $\gamma_{i}$ or $\sigma_{i}$. The coordinates $x_{i}$ and $w_{i}$ assign the $\B$-lengths of the boundary curves of the pair of pants, hence there is a unique family of representations of $\Gamma_{0,3}$ into $\PSL(2,\B)$ realising them. The coefficients $y_{i}$ give a unique way of gluing these representations so that the $\B$-twist-bend parameters of the resulting representation $\rho$ along the curve $\gamma_{i}$ is precisely $y_{i}$, as explained previously in this section. Therefore, the holonomy $\rho$ is uniquely determined by $(x_{1}, \dots, x_{3g-3+n}, y_{1}, \dots, y_{3g-3+n}, z_{1}, \dots z_{n})$. Finally, looking at the sign of $w_{i}$ we can determine the boundary at infinity of the developing map, and thus the structure, uniquely. Namely, if $w_{i}=0$, then $z_{i}\in \partial\overline{\C^{+}}$ and $\rho(\sigma_{i})$ is semi-loxodromic or parabolic. Otherwise, $z_{i}$ is loxodromic and the sign of $w_{i}$ determines which type of sawtooth to use to complete the limit set of $\rho$ to an achronal topological circle.\\
\indent Let us check that the map $\Phi_{\PP}^{reg}$ is continuous. Let $\mu_{n}$ be a sequence of GHM anti-de Sitter structures converging to $\mu$. Then $\hol_{\mu_{n}}$ converges to $\hol_{\mu}$, up to conjugation, and $\partial_{\infty}(\dev_{\mu_{n}})$ converges to $\partial_{\infty}(\dev_{\mu})$ in the Hausdorff topology. This immediately implies that the $\B$-lengths and the $\B$-twist-bend parameters converge, as they depend continuously, by definition, on the holonomy representation. Let us now consider what happens to the coordinates $\delta_{\mu_{n}}(\sigma_{i})$. We know that $\hol_{\mu_{n}}(\sigma_{i})$ converges to $\hol_{\mu}(\sigma_{i})$. We distinguish two cases:
\begin{itemize}
    \item if $\hol_{\mu}(\sigma_{i})$ is semi-loxodromic or parabolic, then $\lim_{n \to \infty} |\ell_{\B}(\hol_{\mu_{n}}(\sigma_{i}))|=0$ and $\delta_{\mu}(\sigma_{i})=0$. Because
    \[
        \lim_{n \to \infty}\delta_{\mu_{n}}(\sigma_{i})=\lim_{n \to \infty} |\ell_{\B}(\hol_{\mu_{n}}(\sigma_{i}))|\epsilon_{\mu_{n}}(\sigma_{i})=0  \ , 
    \]
    we have that $\delta_{\mu_{n}}(\sigma_{i})$ converges to $\delta_{\mu}(\sigma_{i})$. 
    \item if $\hol_{\mu}(\sigma_{i})$ is loxodromic, then the convergence of the limit set in the Hausdorff topology implies that the sawteeth connecting the attracting and repelling fixed points of $\hol_{\mu_{n}}(\sigma_{i})$ are eventually of the same type. Therefore, $\epsilon_{\mu_{n}}(\sigma_{i})$ are eventually $+1$ or $-1$ independently of $n$, so $\delta_{\mu_{n}}(\sigma_{i})$ converges to  $\delta_{\mu}(\sigma_{i})$.
\end{itemize}
We are left to prove that $\Phi_{\PP}^{reg}$ is proper. Let $\mu_{n}$ be a sequence of GHM anti-de Sitter structures such that $p_{n}=\Phi_{\PP}^{reg}(\mu_{n})$ converges to $p$ in $E$. This immediately implies that $\hol_{\mu_{n}}$ converges to an admissible representation $\rho:\pi_{1}(S)\rightarrow \PSL(2,\B)$. In particular the limit set $\Gamma_{n}$ of $\hol_{\mu_{n}}$ converges to the limit set $\Gamma_{\rho}$ of $\rho$. Looking at the $w$-coordinates of $p$, we have a way of completing $\Gamma_{\rho}$ in a $\rho$-equivariant way to an achronal topological circle $\tilde{\Gamma}_{\rho}$. The pair $(\rho, \tilde{\Gamma}_{\rho})$ determines a regular GHM anti -de Sitter structure $\mu$ such that $\hol_{\mu}=\rho$ and $\partial_{\infty}(\dev_{\mu})=\tilde{\Gamma}_{\rho}$. To conclude, we only need to show that $\partial_{\infty}(\dev_{\mu_{n}})$ converges to $\tilde{\Gamma}_{\rho}$ in the Hausdorff topology. Consider a peripheral curve $\sigma_{i}$. We distinguish three cases:
\begin{itemize} 
    \item if $\rho(\sigma_{i})$ is loxodromic, then the attracting and repelling fixed point of $\rho(\sigma_{i})$ are the limit of the attracting and repelling fixed points of $\hol_{\mu_{n}}(\sigma_{i})$. Since the coordinates $w_{i,n}$ of $p_{n}$ are converging to the coordinate $w_{i}$ of $p$, the sawteeth joining the attracting and repelling fixed points of $\hol_{\mu_{n}}(\sigma_{i})$ are eventually of the same type, and converge to the sawtooth joining the attracting and repelling fixed point of $\rho(\sigma_{i})$ in $\Gamma_{\rho}$. 
    \item If $\rho(\sigma_{i})$ is semi-loxodromic or parabolic and $\hol_{\mu_{n}}(\sigma_{i})$ is eventually semi-loxodromic or parabolic, the convergence of their fixed points in $\Ein^{1,1}$ already implies the convergence of the completion of the limit set in the end corresponding to $\sigma_{i}$.
    \item If $\rho(\sigma_{i})$ is semi-loxodromic or parabolic and it is the limit of loxodromic isometries, then the corresponding sawteeth in $\partial_{\infty}(\dev_{\mu_{n}})$ collapse to a light-like segment (if $\rho(\sigma_{i})$ is semi-loxodromic) or to a point (if $\rho(\sigma_{i})$ is parabolic) because the fixed points of $\hol_{\mu_{n}}(\sigma_{i})$ collide in pairs to the two fixed points of $\rho(\sigma_{i})$, if it is semi-loxodromic, and collapse to the unique fixed point of $\rho(\sigma_{i})$, if it is parabolic.
\end{itemize}
\end{proof}

This result can also be extended to regular globally hyperbolic maximal anti-de Sitter structures that are compatible across a multi-curve $\D$ by the fact that an element of $\GH(S,\D)^{reg}$ is simply a tuple of regular GHM anti-de Sitter structures with a compatibility condition on every $c \in \D$.

\begin{cor}\label{cor:augmented}Let $\D$ be a multi-curve on $S$ and let $S_{1}, \dots, S_{k}$ be the connected components of $S\setminus \D$ ad let $g_{j}$ and $n_{j}$ be the genus and the number of punctures of the surface $S_{j}$ for $j=1, \dots k$. Fix pants decomposition $\PP_{j}$ on every $S_{j}$. Then the map
\begin{align*}
    \prod_{j=1}^{k}\Phi_{\PP_{j}}^{reg}: \GH(S,\D)^{reg} & \rightarrow \prod_{j=1}^{k}(\C^{+}\times \B)^{3g_{j}-3+n_{j}}\times (\overline{\C^{+}} \times \R)^{n_{j}}\\
        (\mu_{1}, \dots, \mu_{k}) &\mapsto (\Phi_{\PP_{1}}(\mu_{1}), \dots, \Phi_{\PP_{k}}(\mu_{k}))
\end{align*}
is a homeomorphism onto its image.
\end{cor}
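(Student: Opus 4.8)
The plan is to recognize $\prod_{j=1}^{k}\Phi_{\PP_{j}}^{reg}$ as the restriction of a product of homeomorphisms to a subspace, so that the statement reduces, componentwise, to Theorem \ref{thm:coord}. By definition, an element of $\GH(S,\D)^{reg}$ is a tuple $(\mu_{1}, \dots, \mu_{k})$ with each $\mu_{j} \in \GH(S_{j})^{reg}$, subject to the requirement of being compatible across every curve $c \in \D$. Thus $\GH(S,\D)^{reg}$ sits inside the product $\prod_{j=1}^{k}\GH(S_{j})^{reg}$ as the subset cut out by these compatibility conditions, and the map of the statement is simply the restriction to this subset of the product map $\prod_{j}\Phi_{\PP_{j}}^{reg}$ defined on the whole product.

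The first thing I would verify is that the topology on $\GH(S,\D)^{reg}$ agrees with the subspace topology inherited from $\prod_{j}\GH(S_{j})^{reg}$. This is where the characterization of convergence in the augmented deformation space enters: by the discussion preceding Remark \ref{rmk:degeneration}, a sequence $(\mu_{n,1}, \dots, \mu_{n,k})$ converges in $\GH(S,\D)^{reg}$ precisely when, for each $j$, the holonomies $\hol_{\mu_{n,j}}$ converge and the boundary curves $\partial_{\infty}(\dev_{\mu_{n,j}})$ converge in the Hausdorff topology, up to the $\PSL(2,\B)$-action. Since this is exactly componentwise convergence in the product, the topology on $\GH(S,\D)^{reg}$ coincides with the subspace topology. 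I expect this identification of topologies to be the only genuine content of the proof; everything else is formal.

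Once this is in place, the conclusion follows immediately. By Theorem \ref{thm:coord} each $\Phi_{\PP_{j}}^{reg}$ is a homeomorphism of $\GH(S_{j})^{reg}$ onto its image $E_{j}$, so the finite product $\prod_{j}\Phi_{\PP_{j}}^{reg}$ is a homeomorphism of $\prod_{j}\GH(S_{j})^{reg}$ onto $\prod_{j}E_{j}$. Restricting a homeomorphism onto its image to any subset, with subspace topologies on both ends, again yields a homeomorphism onto its image; applying this to the inclusion $\GH(S,\D)^{reg}\hookrightarrow \prod_{j}\GH(S_{j})^{reg}$ shows that $\prod_{j}\Phi_{\PP_{j}}^{reg}$ restricts to a homeomorphism of $\GH(S,\D)^{reg}$ onto its image, as claimed.
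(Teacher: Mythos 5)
Your proposal is correct and follows the same route as the paper, which justifies the corollary in a single sentence by observing that an element of $\GH(S,\D)^{reg}$ is just a tuple of regular GHM structures (one per component $S_{j}$) with a compatibility condition, so that Theorem \ref{thm:coord} applies componentwise. You merely make explicit the one point the paper leaves implicit, namely that the topology on $\GH(S,\D)^{reg}$ agrees with the subspace topology from $\prod_{j}\GH(S_{j})^{reg}$, which is a worthwhile addition but not a different argument.
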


\section{Coordinates on the augmented moduli space}\label{sec:mainthm}
In this section we are going to see how to continuously extend the coordinates in Theorem \ref{thm:coord} to the augmented deformation space. For simplicity, we assume that $S$ is closed of genus $g\geq 2$. Recall that the augmented deformation space $\GH(S)^{aug}$ encodes how a structure can degenerate along a multi-curve $\D$. It turns out that we are able to extend the coordinates of Theorem \ref{thm:coord} if we quotient out the action of a subgroup $G_{\D}$ of the mapping class group generated by Dehn-twists about the simple closed curves in $\D$. This should not come as a surprise because in the classical Teichm\"uller theory a similar phenomenon occurs as well, when trying to extend the Fenchel-Nielsen coordinates to the augmented Teichm\"uller space. 

\begin{teo}\cite{Hubbard_Koch}\label{thm:augteich}. Let $\T(S)^{aug,\D}$ be the Teichm\"uller space augmented along the curves in $\D$ only. Choose a pants decomposition $\PP\supset \D$ of $S$, and let $\ell_{i}$ and $\theta_{i}$ denote the length and twist coordinates of curves in $\PP$, where $\theta_{i}$ are re-normalised so that Dehn twists act by $\theta_{i}\mapsto \theta_{i}+2\pi$. Then, the quotient $\T(S)^{aug,\D}/G_{\D}$ has global coordinates given by
\[
    \Bigg(\prod_{j=1}^{m}(\ell_{j}\cos(\theta_{j}), \ell_{j}\sin(\theta_{j}))\Bigg) \times \Bigg(\prod_{j=m+1}^{3g-3}(\ell_{j}, \theta_{j})\Bigg) \in \R^{2m}\times (\R^{+}\times \R)^{3g-3} \ ,
\]
where the first $m$ coordinates are related to curves in $\D$.
\end{teo}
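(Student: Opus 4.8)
The plan is to build the coordinate map factor by factor from classical Fenchel–Nielsen coordinates and to recognise the pinched factors as polar coordinates on a cone. Recall that ordinary Fenchel–Nielsen coordinates give a homeomorphism $\T(S)\cong (\R^{+}\times\R)^{3g-3}$, where the $i$-th factor records the length $\ell_{i}>0$ and twist $\theta_{i}\in\R$ of the curve $\gamma_{i}\in\PP$; a Dehn twist about $\gamma_{j}$ acts by $\theta_{j}\mapsto\theta_{j}+2\pi$ in the chosen normalisation and fixes every other coordinate. The space $\T(S)^{aug,\D}$ is stratified by the sub-multicurves $\D'\subseteq\D$: on the top stratum all $\ell_{j}>0$, and on the stratum indexed by $\D'$ exactly the curves in $\D'$ are pinched, so that for $\gamma_{j}\in\D'$ one has $\ell_{j}=0$ and the twist $\theta_{j}$ is no longer defined. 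I would set the map on the $3g-3-m$ factors coming from curves outside $\D$ to be the identity $(\ell_{j},\theta_{j})\mapsto(\ell_{j},\theta_{j})$, and on each of the $m$ factors coming from $\D$ to be the polar map $(\ell_{j},\theta_{j})\mapsto(\ell_{j}\cos\theta_{j},\ell_{j}\sin\theta_{j})$.

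The key observation is that the polar map is exactly the chart that simultaneously absorbs the Dehn-twist quotient and the collapse of the twist at a node. For a single curve $\gamma_{j}\in\D$ the relevant piece of $\T(S)^{aug,\D}/G_{\D}$ is the set $\{\ell_{j}>0\}/(\theta_{j}\sim\theta_{j}+2\pi)$ together with the single boundary point $\{\ell_{j}=0\}$ attached as $\ell_{j}\to 0$. After the Dehn-twist quotient $\theta_{j}$ lives in $\R/2\pi\Z$, so this piece is the cone obtained from $[0,\infty)\times(\R/2\pi\Z)$ by collapsing $\{0\}\times(\R/2\pi\Z)$ to a point. The map $(\ell_{j},\theta_{j})\mapsto(\ell_{j}\cos\theta_{j},\ell_{j}\sin\theta_{j})$ is precisely the canonical homeomorphism from this cone onto $\R^{2}$: it is a $2\pi$-periodic bijection onto $\R^{2}\setminus\{0\}$ for $\ell_{j}>0$, and it sends the entire boundary circle to the origin, matching the degeneration of the twist parameter at the node.

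With this picture I would verify the homeomorphism in the usual three steps. Bijectivity follows factor by factor from the cone description together with the bijectivity of ordinary Fenchel–Nielsen coordinates on each open stratum. Continuity reduces to continuity of the polar map together with the defining property of the augmented topology, namely that $\ell_{j}\to 0$ (with the remaining coordinates convergent) is exactly convergence to the corresponding noded surface, independently of the behaviour of $\theta_{j}$; this is what makes the collapse to the cone point continuous. For the inverse, away from the boundary strata one uses that Fenchel–Nielsen coordinates are a homeomorphism on $\T(S)$, while at a boundary stratum a sequence in the target converging to a point with vanishing $\gamma_{j}$-coordinate has radial coordinate $\ell_{j}\to 0$ and hence pulls back to a sequence converging to the node.

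The main obstacle is to show that the augmented Fenchel–Nielsen topology on $\T(S)^{aug,\D}$ agrees, through these charts, with the product topology on the target \emph{uniformly across all strata $\D'\subseteq\D$ simultaneously}. Concretely, one must check that $G_{\D}$ acts so that the quotient is Hausdorff and that a neighbourhood basis of a noded surface is given by the conditions ``$\ell_{j}$ small for the pinched curves, remaining coordinates close'', which is where the real content lies and where one invokes the precise construction of the augmented space (equivalently its plumbing/Deligne–Mumford description). Once the two topologies are matched on each cone factor and each undegenerate factor, the product of the charts assembles into the claimed global homeomorphism, and the bookkeeping of which factors collapse recovers the target $\R^{2m}\times(\R^{+}\times\R)^{3g-3-m}$.
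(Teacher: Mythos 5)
This statement is quoted from Hubbard--Koch and the paper gives no proof of it, so there is nothing internal to compare your argument against; what you have written is the standard argument one would find in the literature. Your outline is structurally sound: the identification of each $\D$-factor of $\T(S)^{aug,\D}/G_{\D}$ with the cone $\bigl([0,\infty)\times(\R/2\pi\Z)\bigr)/\bigl(\{0\}\times(\R/2\pi\Z)\bigr)$ and the observation that $(\ell_{j},\theta_{j})\mapsto(\ell_{j}\cos\theta_{j},\ell_{j}\sin\theta_{j})$ is exactly the canonical homeomorphism of that cone onto $\R^{2}$ is the right mechanism, and you correctly note (and silently fix) that the exponent in the target should be $3g-3-m$ rather than $3g-3$.

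The one point to be clear-eyed about: the step you describe as ``the defining property of the augmented topology, namely that $\ell_{j}\to 0$ with the remaining coordinates convergent is exactly convergence to the noded surface, independently of $\theta_{j}$'' is not a definition but the substantive theorem. The augmented Teichm\"uller space is defined via convergence of marked (noded) hyperbolic structures, not via Fenchel--Nielsen coordinates, and proving that the Fenchel--Nielsen charts are continuous with continuous inverse across all boundary strata simultaneously --- together with the Hausdorffness of the $G_{\D}$-quotient, where the action fails to be properly discontinuous precisely along the pinched strata --- is the actual content of the Hubbard--Koch result. You flag this honestly in your final paragraph and defer it to ``the precise construction of the augmented space,'' which is acceptable for a cited classical theorem but means your write-up is a correct reduction to that input rather than a self-contained proof. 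Since the paper itself treats the statement as a black box, this level of detail is entirely appropriate for its role here.
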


Let us first describe the action of a Dehn-twist in our setting. Fix a pants decomposition $\PP\supset \D$ on the surface $S$. A Dehn-twist about a curve $\gamma \in \D$ acts on $\Phi_{\PP}(\GH(S))$ by changing the associated $\B$-twist-bend parameters as follows
\[
    \tw_{\B}(\hol_{\mu}(\gamma)) \mapsto \tw_{\B}(\hol_{\mu}(\gamma))+\ell_{\B}(\hol_{\mu}(\gamma)) \ ,
\]
and does not affect the other coordinates. It is then convenient to re-normalise the $\B$-twist-bend parameter by defining
\[
    \theta_{\B}(\hol_{\mu}(\gamma)):=2\pi \frac{\tw_{\B}(\hol_{\mu}(\gamma))}{\ell_{\B}(\hol_{\mu}(\gamma))}
\]
so that the Dehn-twist about $\gamma$ acts as
\[
    \theta_{\B}(\hol_{\mu}(\gamma)) \mapsto \theta_{\B}(\hol_{\mu}(\gamma))+2\pi \ .
\]
In particular, the imaginary part of $\theta_{\B}(\hol_{\mu}(\gamma))$ is invariant under the action of a Dehn-twist. The following change of coordinates will also be useful:

\begin{lemma}\label{lm:changecoord} The map $H:\C^{+} \times S^{1} \times \R  \rightarrow \R\times (\R^{2}\setminus \{0\}) \times \R$ defined by $H(a+\tau b, e^{i\theta}, c)$
\[
=\Bigg(b, |a+\tau b|\sqrt{1-\tanh^{2}(c)}\cos(\theta), |a+\tau b|\sqrt{1-\tanh^{2}(c)}\sin(\theta), |a+\tau b|\tanh(c)\Bigg)
\]
is a homeomorphism.
\end{lemma}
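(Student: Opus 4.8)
The plan is to prove that $H$ is a homeomorphism by exhibiting an explicit continuous inverse; since $H$ is manifestly continuous, producing a two-sided continuous inverse is all that is required. Throughout I write $z=a+\tau b$ and abbreviate $r:=|a+\tau b|=a^{2}-b^{2}$, which is strictly positive exactly because $z\in\C^{+}$ is space-like. I first record that $H$ really lands in the stated target: denoting the output by $(B,X,Y,T)$, one has $X^{2}+Y^{2}=r^{2}(1-\tanh^{2}(c))>0$, so the middle pair $(X,Y)$ never vanishes and the image lies in $\R\times(\R^{2}\setminus\{0\})\times\R$. Continuity of $H$ needs no comment.

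The key observation that drives the inversion is the identity
\[
    X^{2}+Y^{2}+T^{2}=r^{2}\bigl((1-\tanh^{2}(c))+\tanh^{2}(c)\bigr)=r^{2},
\]
so that the radius is recovered as $r=\sqrt{X^{2}+Y^{2}+T^{2}}$, the positive root being forced by $r>0$. Everything else then falls out in order: I recover $b=B$ and $a=\sqrt{r+b^{2}}$; I note that $T/r=\tanh(c)$ lies in $(-1,1)$ precisely because $T^{2}=r^{2}-(X^{2}+Y^{2})<r^{2}$, so that $c=\arctanh(T/r)$ is well-defined; and I read off $e^{i\theta}$ as the unit vector $(X+iY)/\sqrt{X^{2}+Y^{2}}$. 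This gives the candidate
\[
    H^{-1}(B,X,Y,T)=\Bigl(\sqrt{r+B^{2}}+\tau B,\ \tfrac{X+iY}{\sqrt{X^{2}+Y^{2}}},\ \arctanh(T/r)\Bigr),\quad r=\sqrt{X^{2}+Y^{2}+T^{2}}.
\]

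The remaining checks are that $H^{-1}$ lands in the domain $\C^{+}\times S^{1}\times\R$ and is continuous, after which a direct substitution verifies that the two compositions are the identity. For the domain I would note that the recovered $z$ has $\Ree(z)=\sqrt{r+B^{2}}>0$ and $|z|=(r+B^{2})-B^{2}=r>0$, hence $z\in\C^{+}$; the middle entry is by construction a unit vector; and $c\in\R$ because $|T/r|<1$. Continuity of $H^{-1}$ on $\R\times(\R^{2}\setminus\{0\})\times\R$ is then immediate, since on this set the denominators $\sqrt{X^{2}+Y^{2}}$ and $r$ are strictly positive and the argument of $\arctanh$ stays in $(-1,1)$, so each coordinate of $H^{-1}$ is a composite of continuous functions.

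I do not expect a genuine obstacle in this argument; conceptually $H$ simply factors as the homeomorphism $\C^{+}\cong\R^{+}\times\R$ given by $z\mapsto(|z|,\Ima(z))$ followed by the passage to \emph{hyperbolic spherical coordinates} $\R^{+}\times S^{1}\times\R\cong(\R^{2}\setminus\{0\})\times\R$. The only place demanding a little attention is the bookkeeping that ties the target condition $(X,Y)\neq 0$ to the ranges $r>0$ and $|\tanh(c)|<1$: this single condition is exactly what simultaneously guarantees surjectivity of $H$ and the good definition of $\arctanh(T/r)$ in the inverse, and so it deserves to be stated explicitly rather than glossed over.
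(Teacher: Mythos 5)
Your proof is correct and follows essentially the same route as the paper: both arguments exhibit the explicit continuous inverse obtained from the identity $X^{2}+Y^{2}+T^{2}=|a+\tau b|^{2}$ and deduce continuity of the inverse from $(X,Y)\neq(0,0)$. If anything, your version is the more careful one, since the paper's printed inverse feeds $w/(y^{2}+z^{2}+w^{2})$ (rather than $w/\sqrt{y^{2}+z^{2}+w^{2}}$) into $\arctanh$, a slip that your formula $\arctanh(T/r)$ with $r=\sqrt{X^{2}+Y^{2}+T^{2}}$ avoids, and you make explicit where the condition $(X,Y)\neq 0$ is used for both surjectivity and the well-definedness of $\arctanh$.
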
 
\begin{proof}The map $H$ is clearly continuous and its inverse is 
\[
    H^{-1}(x,y,z,w)=\Bigg(\sqrt{x^{2}+y^{2}+z^{2}+w^{2}}+\tau y, \frac{y+iz}{\sqrt{y^{2}+z^{2}}} ,\arctanh\Big(\frac{w}{y^{2}+z^{2}+w^{2}}\Big)\Bigg) \ ,
\]
which is well-defined and continuous because $(y,z)\in \R^{2}\setminus \{0\}$\ .
\end{proof}
 
\begin{defi}Let $\D'\subset \D$ be multi-curves in $S$ and let $\mu \in \GH(S,\D')$. Given $\gamma \in \D$, we define
\[
  \mathcal{H}(\mu(\gamma))=\begin{cases}
    H\big(\ell_{\B}(\hol_{\mu}(\gamma)), e^{i\Ree(\theta_{\mu}(\gamma))}, \Imm(\theta_{\mu}(\gamma))\big) \  \ \ \ \ \ \ \ \text{if $\gamma \notin \D'$} \\
    (\Imm(\ell_{\B}(\hol_{\mu}(\gamma))), 0, 0, \delta_{\mu}(\gamma)) \ \ \ \ \ \ \ \ \ \ \ \ \ \ \  \ \ \ \ \text{if $\gamma \in \D'$}
    \end{cases}
\]
\end{defi}

We can now prove the main theorem:

\begin{teo}Let $\mu \in \mathcal{GH}(S)^{aug}$ and let $\D$ be a multi-curve so that $\mu \in \mathcal{GH}(S,\D)^{reg}$. Fix a pants decomposition $\PP \supset \D$ of $S$. Let $G_{\D}$ be the subgroup of the mapping class group generated by Dehn-twists about the simple closed curves in $\D$. Then
\[
    \GH(S)^{aug,\D}:=\bigcup_{\D'\subset \D}\GH(S,\D')^{reg}
\]
is an open subset of $\GH(S)^{aug}$ containing $\mu$ and invariant under $G_{\D}$. Moreover, the map $\Phi_{\PP, \D}: \GH(S)^{aug, \D}/G_{\D}\rightarrow (\C^{+}\times \B)^{3g-3-m}\times \R^{4m}$ defined by
\[
\mu \mapsto \prod_{\gamma \in \PP\setminus \D}(\ell_{\B}(\hol_{\mu}(\gamma)), \tw_{\B}(\hol_{\mu}(\gamma))) \prod_{\gamma \in \D}\mathcal{H}(\mu(\gamma)) \ ,
\]
where $m$ is the number of curves in $\D$, is a homeomorphism. 
\end{teo}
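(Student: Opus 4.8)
The plan is to establish, in order, the two topological claims about $\GH(S)^{aug,\D}$, the bijectivity of $\Phi_{\PP,\D}$ on the quotient, and finally its bicontinuity, the last being the genuinely delicate point.

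\textbf{Openness and invariance.} First I would transport everything through the homeomorphism $\Psi^{aug}\colon\mathcal{V}(S)\to\GH(S)^{aug}$. Under this identification each stratum $\GH(S,\D')^{reg}$ corresponds to the part of the bundle lying over $\T(S,\D')$, so $\GH(S)^{aug,\D}$ is the restriction of $\mathcal{V}(S)$ over $\bigcup_{\D'\subset\D}\T(S,\D')=\T(S)^{aug,\D}$, the locus of noded surfaces all of whose nodes lie in $\D$. This locus is open in $\T(S)^{aug}$: by the local structure of the augmented Teichm\"uller space underlying Theorem \ref{thm:augteich}, a neighbourhood of a surface with node set $\D_{0}\subset\D$ meets only the strata $\T(S,\D_{0}')$ with $\D_{0}'\subset\D_{0}\subset\D$, so any point of $\T(S)^{aug,\D}$ has a neighbourhood inside it. As the bundle projection is continuous, its preimage is open, and $\Psi^{aug}$ transports this to openness of $\GH(S)^{aug,\D}$, which visibly contains $\mu$ since $\D\subset\D$. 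For $G_{\D}$-invariance I observe that each generating Dehn twist $T_{\gamma}$, $\gamma\in\D$, fixes every $\D'\subset\D$ as a multi-curve (being supported away from the remaining curves of $\D$), hence maps $\GH(S,\D')^{reg}$ to itself; the union is therefore preserved.

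\textbf{Bijectivity modulo $G_{\D}$.} The map descends to the quotient: a twist $T_{\gamma}$ sends $\tw_{\B}(\hol_{\mu}(\gamma))$ to $\tw_{\B}(\hol_{\mu}(\gamma))+\ell_{\B}(\hol_{\mu}(\gamma))$, i.e. $\theta_{\B}\mapsto\theta_{\B}+2\pi$, so both $e^{i\Ree(\theta_{\B})}$ and $\Imm(\theta_{\B})$ — the only twist data entering $\mathcal{H}$ — are unchanged, while the blocks attached to the other curves are untouched. To invert $\Phi_{\PP,\D}$ I read off the pinched multi-curve $\D'$ directly from the image: for $\gamma\in\D$ the middle two coordinates of $\mathcal{H}(\mu(\gamma))$ equal $|\ell_{\B}|\sqrt{1-\tanh^{2}(\Imm(\theta_{\B}))}(\cos\theta,\sin\theta)$ when $\gamma$ is interior and loxodromic, hence are nonzero, and vanish by definition when $\gamma\in\D'$. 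On the interior curves $\PP\setminus\D'$ the image recovers $(\ell_{\B},\tw_{\B})$ outright (for $\PP\setminus\D$) or, via $H^{-1}$ of Lemma \ref{lm:changecoord}, recovers $\ell_{\B}$, $\Imm(\theta_{\B})$ and $\Ree(\theta_{\B})$ modulo $2\pi$ (for $\D\setminus\D'$), i.e. the twist up to the $G_{\D}$-ambiguity. On a pinched curve $\gamma\in\D'$ the block $(\Imm(\ell_{\B}),0,0,\delta)$ recovers $(\ell_{\B},\delta)$ by setting $\Ree(\ell_{\B})=\sqrt{\Imm(\ell_{\B})^{2}+\delta^{2}}$, which automatically satisfies the constraint $\delta^{2}=|\ell_{\B}|^{2}$ from Theorem \ref{thm:coord}; this is exactly why the target here is an unconstrained $\R^{4m}$. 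Feeding this reconstructed data into the inverse of the product map of Corollary \ref{cor:augmented} yields a unique tuple in $\GH(S,\D')^{reg}$, well defined modulo the twists about $\D\setminus\D'$ (those about $\D'$ act trivially, these curves being peripheral). This proves injectivity on the quotient and surjectivity onto the whole target at once.

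\textbf{Bicontinuity and the main difficulty.} Continuity and its converse within a single stratum follow immediately from Corollary \ref{cor:augmented} and Lemma \ref{lm:changecoord}. The heart of the proof is continuity \emph{across} strata, which I would analyse via the convergence criterion of Remark \ref{rmk:degeneration} and Lemma \ref{lm:degeneration2}: if $\mu_{n}\to\mu_{\infty}$ then the holonomies converge on each piece of $S\setminus\D'$ and the developing curves converge in the Hausdorff topology. A curve $\gamma\in\D$ that remains loxodromic in the limit stays interior along the sequence by openness, so every quantity converges through $H$. The delicate case is $\gamma\in\D$ interior and loxodromic along the sequence but pinched in the limit: here length convergence forces $\Imm(\ell_{\B})$ to converge (matching the first coordinate of the pinched block), while the design of $H$ makes the factor $\sqrt{1-\tanh^{2}(\Imm(\theta_{\B}))}$ drive the two middle coordinates to $0$ and makes $|\ell_{\B}|\tanh(\Imm(\theta_{\B}))$ converge to $\delta_{\infty}$. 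Concretely, a future/past sawtooth forms exactly as $\Imm(\theta_{\B})\to\pm\infty$ with $|\ell_{\B}|$ bounded below, whence $\tanh(\Imm(\theta_{\B}))\to\pm1$ and the fourth coordinate tends to $\pm|\ell_{\B}^{\infty}|=\epsilon_{\infty}|\ell_{\B}^{\infty}|=\delta_{\infty}$; in the semi-loxodromic or parabolic limit $|\ell_{\B}|\to 0$ and the three nontrivial coordinates all collapse to $0$. Reconciling this analytic limit with the Hausdorff collapse of sawteeth to segments or points recorded in the continuity part of Theorem \ref{thm:coord} gives $\mathcal{H}(\mu_{n}(\gamma))\to\mathcal{H}(\mu_{\infty}(\gamma))$.

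Running the same criterion backwards yields continuity of the inverse: from $p_{n}\to p_{\infty}$ the per-piece Fenchel--Nielsen data converge, so the holonomies converge on each piece, and whenever a block crosses from $\{(y,z)\neq 0\}$ into $\{(y,z)=0\}$ with nonzero fourth coordinate the reconstruction forces $\Imm(\theta_{\B})\to\pm\infty$, producing the correct completed limit set; by Lemma \ref{lm:degeneration2} this is precisely convergence in $\GH(S)^{aug}$. Hence $\Phi_{\PP,\D}$ is a homeomorphism. I expect the real obstacle to be exactly this cross-stratum continuity, where one must control the product $|\ell_{\B}|\tanh(\Imm(\theta_{\B}))$ of one degenerating and one possibly diverging factor and match its analytic limit with the geometric Hausdorff convergence of the sawteeth furnished by the degeneration results.
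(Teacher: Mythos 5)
Your overall architecture matches the paper's: openness via the identification of $\GH(S)^{aug,\D}$ with the quadratic-differential bundle over $\T(S)^{aug,\D}$, bijectivity by reading off the pinched set $\D'$ from the vanishing of the middle coordinates and reconstructing the structure piece by piece via Corollary \ref{cor:augmented}, and continuity via Remark \ref{rmk:degeneration} and Lemma \ref{lm:degeneration2}. However, there is a genuine gap exactly at the point you yourself flag as ``the real obstacle.'' The entire cross-stratum argument hinges on the assertion that ``a future/past sawtooth forms exactly as $\Imm(\theta_{\B})\to\pm\infty$,'' and you use this equivalence in both directions (to show $\mathcal{H}(\mu_{n}(\gamma))\to\mathcal{H}(\mu_{\infty}(\gamma))$, and again in the properness direction to produce ``the correct completed limit set''), but you never prove it. This is precisely the content of the paper's Claim 4.5, and it is not self-evident: one must first pass from divergence of $\Imm(\theta_{\B})$ to divergence of $\Imm(\tw_{\B})$ (nontrivial, since $\theta_{\B}=2\pi\,\tw_{\B}/\ell_{\B}$ is a quotient in the ring $\B$ and the imaginary parts do not simply scale --- the paper needs an explicit asymptotic computation after normalising $\Ree(\tw_{\B})$ into a fundamental domain for $G_{\D}$), then deduce that in the decomposition $\tw_{\B}=\frac{\lambda_{n}+\mu_{n}}{2}+\tau\frac{\lambda_{n}-\mu_{n}}{2}$ one has $\lambda_{n}\to+\infty$ and $\mu_{n}\to-\infty$, then compute that the attracting fixed point of the adjacent boundary curve $\beta$ converges to $[e^{+},e^{-}]$, the future vertex of the sawtooth, and finally invoke the $1$-Lipschitz-graph/achronality property of the boundary curves to conclude that the whole future-directed sawtooth lies in the Hausdorff limit. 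None of these steps appears in your proposal.

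A second, related omission: you need to justify that $\Imm(\theta_{\B}(\hol_{\mu_{n}}(\gamma)))$ actually diverges with a \emph{definite sign} when $\gamma$ gets pinched in the limit, rather than staying bounded or oscillating between $\pm\infty$ along subsequences. The paper handles this by a contradiction argument (a bounded subsequence would converge, modulo $G_{\D}$, into a smaller stratum) combined with Claim 4.5 to exclude subsequences of the wrong sign using the known type of the limiting sawtooth. Your proposal takes the divergence and its sign for granted. Everything else in your write-up (openness, $G_{\D}$-invariance, well-definedness on the quotient, reconstruction of $\ell_{\B}$ from the pinched block via $\Ree(\ell_{\B})=\sqrt{\Imm(\ell_{\B})^{2}+\delta^{2}}$, and within-stratum continuity) is sound and agrees with the paper, but without a proof of the sawtooth--twist correspondence the argument is incomplete.
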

\begin{proof} We first verify that $\GH(S)^{aug, \D}$ is an open set. Let $\Psi^{aug}:\mathcal{V}(S)\rightarrow \GH(S)^{aug}$ be the bijection defined in Section \ref{sec:GHMC}. By definition of the topology in $\GH(S)^{aug}$, the set $\GH(S)^{aug, \D}$ is open if and only if it is the image of an open set under $\Phi^{aug}$. Now,
\[
    \GH(S)^{aug,\D}=\bigcup_{\D'\subset \D}\GH(S,\D')^{reg}=\bigcup_{\D' \subset \D}\Psi_{\D'}(\mathcal{V}_{\D'})=\Psi^{aug}\Big(\bigcup_{\D' \subset \D}\mathcal{V}_{\D'} \Big)
\]
and $\bigcup_{\D' \subset \D}\mathcal{V}_{\D'}$ is open because it is the bundle of meromorphic quadratic differentials of poles of order at most $2$ (with a compatibility condition) over $\T(S)^{aug,\D}$, which is open by Theorem \ref{thm:augteich}. \\
\indent The map $\Phi_{\PP,\D}$ is well-defined because a Dehn-twist about $\gamma$ leaves the imaginary part of $\theta_{\B}(\gamma)$ invariant and adds $2\pi$ to the real part of $\theta_{\B}(\gamma)$, so the value of  the function $H(\mu(\gamma))$ is invariant under the action of $G_{\D}$ and descends to the quotient. \\
\indent Let us show that $\Phi_{\PP, \D}$ is bijective. We enumerate the curves in $\D=\{\gamma_{1}, \dots, \gamma_{m}\}$. Let $p$ be a point in $(\C^{+}\times \B)^{3g-3-m}\times \R^{4m}$. We write 
\begin{equation}\label{eq:longcoord}
    p=(x_{1}, y_{1}, \dots, x_{3g-3+m}, y_{3g-3+m}, a_{1}, b_{1}, c_{1}, d_{1}, \dots, a_{m}, b_{m}, c_{m}, d_{m}) \ .
\end{equation}Let $\D'\subset \D$ consisting of all the curves $\gamma_{j} \in \D$ such that the index $j$ satisfies $b_{j}^{2}+c_{j}^{2}= 0$. If $p$ is in the image of $\Phi_{\PP,\D}$, it must necessarily come from a $\mu \in \GH(S,\D')$. Notice that the curves in $\PP \setminus \D'$ give pair of pants decompositions for each connecting component $S_{1}, \dots, S_{k}$ of $S\setminus \D'$, thus we can find such a $\mu=(\mu_{1}, \dots, \mu_{k})$ by considering the representations $\rho_{i}: \pi_{1}(S_{i})\rightarrow \PSL(2,\B)$ and the curves at infinity $\Gamma_{i} \subset \Ein^{1,1}$ for $i=1, \dots k$, with the following properties:
\begin{itemize}
    \item the $\B$-lengths of the curves in $\PP \setminus \D$ are given by the $x_{i}$-coordinates of $p$;
    \item the $\B$-twist-bend parameters of the curves in $\PP \setminus \D$ are given by the $y_{i}$-coordinates;
    \item the $\B$-lengths of the curve $\gamma_{i} \in \D \setminus \D'$ are given by the first coordinate of $H^{-1}(a_{i}, b_{i}, c_{i}, d_{i})$;
    \item the re-normalised $\B$-twist-bend parameter of the curve $\gamma_{i} \in \D \setminus \D'$ is given by $\theta_{i}+\tau w_{i}$ where $H^{-1}(a_{i}, b_{i}, c_{i}, d_{i})=(z_{i}, e^{i\theta_{i}}, w_{i})$. This determines the $\B$-twist-bend parameter of $\gamma_{i}$ only up to integer multiples of $2 \pi$. However, different choices will be related by Dehn-twists.
    \item the $\B$-lengths of the curves $\gamma_{j} \in \D'$ are given by $\sqrt{d_{j}^{2}-a_{j}^{2}}+\tau a_{j}$.
    Notice that $\gamma_{j}$ is sent to a loxodromic isometry if and only if $d_{j} \neq 0.$
    \item The limit set of each of the representations $\rho_{i}$ obtained from the above rules is completed, in a $\rho_{i}$-equivariant way, to an achronal topological circle $\Gamma_{i}$ by connecting points that lie on the same light-like segments and inserting past-directed or future-directed sawteeth joining the endpoints of $\rho_{i}(\gamma_{j})$ according to the sign of $d_{j}$. 
\end{itemize}
The quotient of the domain of dependence of $\Gamma_{i}$ by $\rho_{i}$ gives a regular GHM anti-de Sitter structure $\mu_{i}$ on $S_{i} \times \R$ for every $i=1, \dots, k$. It is clear from the definition of the map $\Phi_{\PP, \D}$ that, setting $\mu=(\mu_{1}, \dots, \mu_{k})$, we have $\Phi_{\PP,\D}([\mu])=p$. Moreover, such $\mu$ is unique up to Dehn-twists by Corollary \ref{cor:augmented}. In addition we remark that the holonomy and the developing maps of these structures $\mu_{i}$ depend continuously on the coefficients of $p$ by construction.\\
\indent Let us now show that the map $\Phi_{\PP, \D}$ is continuous. Let $\mu_{n} \in \GH(S)^{aug}$ converging to $\mu_{\infty} \in \GH(S)^{aug}$. Let $\D'\subset \D$ such that $\mu_{\infty} \in \GH(S,\D')$. If $\mu_{n}\in \GH(S,\D')$ for $n$ sufficiently large, then we already know that $\Phi_{\PP,\D}(\mu_{n})$ converges to $\Phi_{\PP,\D}(\mu)$ by Corollary \ref{cor:augmented}. Otherwise, since $\D'$ is a finite set, we can assume that there is a multi-curve $\D''\subsetneq \D'$ such that $\mu_{n}\in \GH(S, \D'')$ for all $n$. Let $S_{1}, \dots, S_{k}$ be the connected components of $S\setminus \D''$. We denote by $\D'_{i}$ the set of curves in $\D'$ that are contained in $S_{i}$. Let $S_{i,1}, \dots, S_{i,j_{i}}$ be the connected components of $S_{i}\setminus \D'_{i}$. Let us write $\mu_{n}=(\mu_{n,i}, \dots, \mu_{n,k})$ where $\mu_{n,i}$ is a regular GHM anti-de Sitter structure on $S_{i}\times \R$, and $\mu_{\infty}=(\mu_{\infty,1,1}, \dots, \mu_{\infty,1, j_{1}}, \dots, \mu_{\infty, k,1}, \dots \mu_{\infty, k, j_{k}})$, where $\mu_{\infty,i,j_{i}}$ is a regular GHM anti-de Sitter structure on $S_{i,j}\times \R$. Let us denote by $\rho_{n,i,j}$ the restriction of the holonomy representation of $\mu_{n,i}$ to the subsurface $S_{i,j}$ and let $\rho_{\infty, i,j}$ denote the holonomy of $\mu_{\infty,i,j}$. By Remark \ref{rmk:degeneration}, we know that $\rho_{n,i,j}$ converges to $\rho_{\infty, i,j}$, therefore, for every curve $\gamma \in \PP\setminus \D'$, the $\B$-lengths and the $\B$-twist-bend parameters for $\mu_{n}$ converge to those for $\mu_{\infty}$, and the same holds the $\B$-lengths parameters of curves in $\gamma \in \D'$. For every curve $\gamma \in \D''$, we have the parameters $\delta_{\mu_{n}}(\gamma)$ and $\delta_{\mu_{\infty}}(\gamma)$ for every $n \in \N$. In the proof of Theorem \ref{thm:coord}, we showed that these converge if and only if the boundary at infinity of the developing maps converge, which is true in our case from Remark \ref{rmk:degeneration}. Les us now consider the behaviour of the $\B$-twist-bend parameters associated to curves $\gamma \in \D'\setminus \D''$. This is the most interesting case, as these are defined for every $\mu_{n}$, but are not defined for $\mu_{\infty}$. We distinguish two cases. \\
{\emph{Case 1.}} If $\hol_{\mu_{\infty}}(\gamma)$ is semi-loxodromic or parabolic, then $\delta_{\mu_{\infty}}(\gamma)=0$. Moreover, by the convergence of the $\B$-lengths, we have that $|\ell_{\B}(\hol_{\mu_{n}}(\gamma))|$ tends to $0$. Therefore, 
    \begin{align*}
        \lim_{n \to \infty}\mathcal{H}(\mu_{n})&=\lim_{n \to \infty}H(\ell_{\B}(\hol_{\mu_{n}}(\gamma)), e^{i\Ree(\theta_{\mu_{n}}(\gamma))}, \Imm(\theta_{\mu_{n}}(\gamma)))\\
        &=(\Ima(\ell_{\B}(\hol_{\mu_{\infty}}(\gamma))),0,0,0)=\mathcal{H}(\mu_{\infty}(\gamma)) \ .
    \end{align*} 
{\emph{Case 2.}} If $\hol_{\mu_{\infty}}(\gamma)$ is loxodromic, then $\delta_{\mu_{\infty}}(\gamma)\neq 0$, and let us assume that it is positive (a similar argument applies to the negative case), which means that the sawtooth in the boundary at infinity of the developing map is future-directed. Notice that 
    \begin{align*}
        \mathcal{H}(\mu_{\infty}(\gamma))&=\big(\Imm(\ell_{\B}(\hol_{\mu_{\infty}}(\gamma)),0,0, |\ell_{\B}(\hol_{\mu_{\infty}}(\gamma))|)\big)\\
        &=\lim_{n \to +\infty}\mathcal{H}(\mu_{n}(\gamma))
    \end{align*}
    if $\Imm(\theta_{\mu_{n}}(\gamma))$ tends to $+\infty$. The fact that $\mu_{n} \in \GH(S, \D')$ but $\mu_{\infty}\in \GH(S,\D'')$ implies that the imaginary part of the re-normalised $\B$-twist-bend parameters must be unbounded, because otherwise, we could find a sub-sequence of $\mu_{n}$ converging to a point in $\GH(S,\D''\setminus \{\gamma\})$ up to the action of $G_{\D}$. Therefore, we only need to understand how the divergence of the imaginary part of the re-normalised $\B$-twist-bend parameters is related to the sawtooth appearing in the boundary at infinity of the developing map. Let $i$ and $j$ be indices such that $\gamma$ bounds $S_{i,j}$. Recall that, by Remark \ref{rmk:degeneration}, the convergence of $\mu_{n}$ implies the convergence of the boundary at infinity of the developing maps. \\
{\bf{Claim 4.5}} If $\Imm(\theta_{\B}(\hol_{\mu_{n}}(\gamma)))$ diverges to $+\infty$ (resp. $-\infty$), then $\partial_{\infty}(\dev_{\mu_{n}})$ develops a future-directed (resp. past-directed) sawtooth in the limit joining the endpoints of $\hol_{\mu_{\infty}}(\gamma)$. \\
Assuming the claim for now, we see that from our assumptions $\Imm(\tw_{\B}(\hol_{\mu_{n}}(\gamma)))$ must indeed tend to $+\infty$, because we already excluded the existence of sub-sequences with a finite limit and we can also exclude the existence of a sub-sequence $\mu_{n_{k}}$ such that $\Imm(\tw_{\B}(\hol_{\mu_{n_{k}}}(\gamma)))$ diverges to $-\infty$, because this would imply that the boundary curves $\partial_{\infty}(\dev_{\mu_{n_{k}}})$ develop a past-directed sawtooth which contradicts our assumptions on $\partial_{\infty}(\dev_{\mu_{\infty}})$. 
Combining all, this shows that $\Phi_{\PP, \D}(\mu_{n})$ converges to $\Phi_{\PP,\D}(\mu_{\infty})$. \\
\indent We are left to prove that the map $\Phi_{\PP,\D}$ is proper. Let $\mu_{n}, \mu \in \GH(S)^{aug,\D}$ be such that $p_{n}=\Phi_{\PP,\D}(\mu_{n})$ converges to $p=\Phi_{\PP, \D}(\mu)$. We have to show that $\mu_{n}$ converges to $\mu$. Let $\D'\subset \D$ be such that $\mu\in \GH(S,\D')$. Writing $p$ as in Equation (\ref{eq:longcoord}), we see that $p\in \Phi_{\PP,\D}(\GH(S,\D'))$ if and only if for every curve $\gamma_{j}\in \D'$ the corresponding coordinates $b_{j}$ and $c_{j}$ vanish. Therefore, we can assume that there is a $\D''\subset \D' \subset \D$ such that $\mu_{n}\in \GH(S,\D'')$ for all $n$ and $\mu \in \GH(S,\D')$. By Lemma \ref{lm:degeneration2}, it is then sufficient to show convergence of the holonomy and of the boundary at infinity of developing maps restricted to each connected component of $S\setminus \D'$. But this follows immediately from the convergence of the coordinates of $p_{n}$ to the coordinates of $p$ by the remark we already made in the proof of the surjectivity that the holonomy and the developing map of $\Phi_{\PP,\D}^{-1}(p)$ depend continuously on the coordinates of $p$.
\end{proof}

\begin{proof}[Proof of Claim 4.5] Exploiting the action of $G_{\D}$, for every $n \in \N$ we can assume that the $\B$-twist bend parameter $\tw_{\B}(\hol_{\mu_{n}}(\gamma))$ has real part in the interval between $0$ and $\Ree(\ell_{\B}(\hol_{\mu_{n}}(\gamma)))$ without changing $\Imm(\theta_{\B}(\hol_{\mu_{n}}(\gamma)))$. In particular, we can assume, since the $\B$-lengths converge, that the real part of $\tw_{\B}(\hol_{\mu_{n}}(\gamma))$ is uniformly bounded in $n$. We observe that
\[
    \Ree(\theta_{\B}(\hol_{\mu_{n}}(\gamma)))=\frac{2\pi\Ree(\tw_{\B}(\hol_{\mu_{n}}(\gamma)))-\Imm(\theta_{\B}(\hol_{\mu_{n}}(\gamma)))\Imm(\ell_{\B}(\hol_{\mu_{n}}(\gamma)))}{\Ree(\ell_{\B}(\hol_{\mu_{n}}(\gamma)))}
\]
so that
\begin{align*}
    \lim_{n \to +\infty}&\frac{\Ree(\theta_{\B}(\hol_{\mu_{n}}(\gamma)))\Imm(\ell_{\B}(\hol_{\mu_{n}}(\gamma)))}{\Ree(\ell_{\B}(\hol_{\mu_{n}}(\gamma)))\Imm(\theta_{\B}(\hol_{\mu_{n}}(\gamma)))}\\
        &=\frac{2\pi\Imm(\ell_{\B}(\hol_{\mu_{n}}(\gamma)))\Ree(\tw_{\B}(\hol_{\mu_{n}}(\gamma)))-\Imm(\theta_{\B}(\hol_{\mu_{n}}(\gamma)))\Imm(\ell_{\B}(\hol_{\mu_{n}}(\gamma)))^{2}}{\Ree(\ell_{\B}(\hol_{\mu_{n}}(\gamma)))^{2}\Imm(\theta_{\B}(\hol_{\mu_{n}}(\gamma)))}\\
        &=-\frac{\Imm(\ell_{\B}(\hol_{\mu_{\infty}}(\gamma)))^{2}}{\Ree(\ell_{\B}(\hol_{\mu_{\infty}}(\gamma)))^{2}}
\end{align*}
because the $\B$-lengths are uniformly bounded. 
In other words, as $n$ goes to infinity,
\begin{align*}
    \Ree(\theta_{\B}(\hol_{\mu_{n}}(\gamma)))&\Imm(\ell_{\B}(\hol_{\mu_{n}}(\gamma)))\\
    &=-\frac{\Imm(\ell_{\B}(\hol_{\mu_{\infty}}(\gamma)))^{2}}{\Ree(\ell_{\B}(\hol_{\mu_{\infty}}(\gamma)))^{2}}\Ree(\ell_{\B}(\hol_{\mu_{n}}(\gamma)))\Imm(\theta_{\B}(\hol_{\mu_{n}}(\gamma)))+o(1)
\end{align*}
from which we deduce that
\begin{align*}
    2\pi&\Imm(\tw_{\B}(\hol_{\mu_{n}}(\gamma)))\\
    &=-\frac{\Imm(\ell_{\B}(\hol_{\mu_{\infty}}(\gamma)))^{2}}{\Ree(\ell_{\B}(\hol_{\mu_{\infty}}(\gamma)))^{2}}\Ree(\ell_{\B}(\hol_{\mu_{n}}(\gamma)))\Imm(\theta_{\B}(\hol_{\mu_{n}}(\gamma)))\\
    & \ \ \ +\Ree(\theta(\hol_{\mu_{n}}(\gamma)))\Imm(\ell_{\B}(\hol_{\mu_{n}}(\gamma)))+o(1)\\
    &=\frac{\Ree(\ell_{\B}(\hol_{\mu_{n}}(\gamma)))|\ell_{\B}(\hol_{\mu_{\infty}}(\gamma))|^{2}}{\Ree(\ell_{\B}(\hol_{\mu_{\infty}}(\gamma)))^{2}}\Imm(\theta_{\B}(\hol_{\mu_{n}}(\gamma)))+o(1) \xrightarrow{n \to \infty} +\infty
\end{align*}
as the $\hol_{\mu_{\infty}}(\gamma)$ is loxodromic by assumption. Therefore, if we choose real numbers $\lambda_{n}$ and $\mu_{n}$ such that
\[
    \tw(\hol_{\mu_{n}}(\gamma))=\frac{\lambda_{n}+\mu_{n}}{2}+\tau\frac{\lambda_{n}-\mu_{n}}{2} \ ,
\]
we must necessarily have $\lim_{n\to +\infty}\lambda_{n}=+\infty$ and $\lim_{n \to +\infty}\mu_{n}=-\infty$, because the real part is uniformly bounded and the imaginary part goes to $+\infty$. \\
Let us now assume that the curve $\gamma$ bounds two different pairs of pants $P_{1}$ and $P_{2}$, where $P_{1}$ is on the left with respect to $\gamma$. The computation for the other case where $\gamma$ is contained in only one pair of pants is similar and left to the reader. Let $\alpha$ be the boundary of $P_{1}$ that follows $\gamma$ in counter-clockwise order and let $\beta$ be the boundary curve of $P_{2}$ that follows $\gamma$ in clockwise order, both oriented so that the pair of pants lies on their left. Up to conjugation, we can assume that for every $n \in \N$, 
\[
    \hol_{\mu_{n}}(\gamma)^{-}=[1,0], \ \ \ \hol_{\mu_{n}}(\gamma)^{+}=[0,1] \ \ \ \text{and} \ \ \ \hol_{\mu_{n}}(\alpha)^{-}=[-1,1] \ .
\]
The attracting fixed point of $\hol_{\mu_{n}}(\beta)$ has then coordinates (see Section \ref{sec:coord})
\[
    \hol_{\mu_{n}}(\beta)^{+}=[e^{\lambda_{n}}e^{+}+e^{\mu_{n}}e^{-},1] \xrightarrow{n \to \infty} [e^{+}, e^{-}]
\]
which is the third vertex in the future-directed sawtooth joining $[1,0]$ and $[0,1]$. Using the identification $\Pp\B^{1} \cong \R\Pp^{1}\times \R\Pp^{1}$, the curves $\partial_{\infty}(\dev_{\mu_{n}})$ are graphs of $1$-Lipschitz maps (\cite{barbot2008}) that have been re-normalised so to send one fixed point to one fixed point. Hence they converge to a limit curve $\Gamma_{\infty}$, which is itself the graph of a $1$-Lipschitz function. Since $\hol_{\mu_{n}}(\beta)^{+} \in \partial_{\infty}(\dev_{\mu_{n}})$ for every $n$, we have that $[e^{+},e^{-}]$ belongs to $\Gamma_{\infty}$. Now, graphs of $1$-Lipschitz functions are achronal curves in the boundary at infinity of anti-de Sitter space with the property that if two points lie on a light-like segment, then this segment must be entirely contained in the curve (\cite[Lemma 1.1]{Tambu_poly}). This implies that $\Gamma_{\infty}$ contains a future-directed sawtooth joining $\hol_{\mu_{\infty}}(\gamma)^{-}$ and $\hol_{\mu_{\infty}}(\gamma)^{+}$.
\end{proof}

\bibliographystyle{alpha}
\bibliography{bs-bibliography}

\begin{thebibliography}{Tam19b}

\bibitem[Bar08]{barbot2008}
Thierry Barbot.
\newblock Causal properties of ads-isometry groups i: causal actions and limit
  sets.
\newblock {\em Adv. Theor. Math. Phys.}, 12(1):1--66, 01 2008.

\bibitem[BS09]{BonSchlGAFA2009}
Francesco Bonsante and Jean-Marc Schlenker.
\newblock Ad{S} manifolds with particles and earthquakes on singular surfaces.
\newblock {\em Geom. Funct. Anal.}, 19(1):41--82, 2009.

\bibitem[Dan11]{Jeff_thesis}
Jeffrey Danciger.
\newblock {\em Geometric transition: from hyperbolic to AdS geometry}.
\newblock PhD thesis, Stanford University, 2011.

\bibitem[Ger70]{MR0270697}
Robert Geroch.
\newblock Domain of dependence.
\newblock {\em J. Mathematical Phys.}, 11:437--449, 1970.

\bibitem[HK14]{Hubbard_Koch}
John~H. Hubbard and Sarah Koch.
\newblock An analytic construction of the {D}eligne-{M}umford compactification
  of the moduli space of curves.
\newblock {\em J. Differential Geom.}, 98(2):261--313, 2014.

\bibitem[Kou92]{MR1196924}
Christos Kourouniotis.
\newblock The geometry of bending quasi-{F}uchsian groups.
\newblock In {\em Discrete groups and geometry ({B}irmingham, 1991)}, volume
  173 of {\em London Math. Soc. Lecture Note Ser.}, pages 148--164. Cambridge
  Univ. Press, Cambridge, 1992.

\bibitem[KS07]{Schlenker-Krasnov}
Kirill Krasnov and Jean-Marc Schlenker.
\newblock Minimal surfaces and particles in 3-manifolds.
\newblock {\em Geom. Dedicata}, 126:187--254, 2007.

\bibitem[LZ18]{Loftin_Zhang}
John Loftin and Tengren Zhang.
\newblock Coordinates on the augmented moduli space of convex
  {$\mathbb{R}\mathbb{P}^{2}$} structures.
\newblock {\em arXiv:1812.11389}, 2018.

\bibitem[Mes07]{Mess}
Geoffrey Mess.
\newblock Lorentz spacetimes of constant curvature.
\newblock {\em Geom. Dedicata}, 126:3--45, 2007.

\bibitem[Tam17]{Tambu_poly}
Andrea Tamburelli.
\newblock Polynomial quadratic differentials on the complex plane and
  light-like polygons in the {E}instein universe.
\newblock {\em arXiv:1712.03767}, 2017.

\bibitem[Tam18]{Tambu_regularAdS}
Andrea Tamburelli.
\newblock Regular globally hyperbolic maximal anti-de {S}itter structures.
\newblock {\em arXiv:1806.08176}, 2018.

\bibitem[Tam19a]{Tambu_degeneration}
Andrea Tamburelli.
\newblock Degeneration of globally hyperbolic maximal anti--de {S}itter
  structures along pinching sequences.
\newblock {\em Differential Geom. Appl.}, 64:125--135, 2019.

\bibitem[Tam19b]{wildAdS}
Andrea Tamburelli.
\newblock Wild globally hyperbolic maximal anti-de {S}itter structures.
\newblock {\em arXiv:1901.00129}, 2019.

\bibitem[Tan94]{MR1266284}
Ser~Peow Tan.
\newblock Complex {F}enchel-{N}ielsen coordinates for quasi-{F}uchsian
  structures.
\newblock {\em Internat. J. Math.}, 5(2):239--251, 1994.

\bibitem[Wie18]{Wienhard_ICM}
Anna Wienhard.
\newblock An invitation to higher teichm\"uller theory.
\newblock {\em arXiv:1803.06870. To appear in Proceedings of the ICM}, 2018.

\end{thebibliography}

\bigskip

\noindent \footnotesize \textsc{DEPARTMENT OF MATHEMATICS, RICE UNIVERSITY}\\
\emph{E-mail address:}  \verb|andrea_tamburelli@libero.it|

\end{document}